\newtheorem{theorem}{Theorem}[section]
\newtheorem{corollary}[theorem]{Corollary}
\newtheorem{lemma}[theorem]{Lemma}
\title{Shape analyticity and singular perturbations for layer potential operators}
\author{Matteo Dalla Riva  \thanks{Dipartimento di Ingegneria, Universit\`a degli Studi di Palermo, Viale delle Scienze, Ed. 8, 90128 Palermo, Italy.} ,  Paolo Luzzini \thanks{Dipartimento di Matematica ``Tullio Levi-Civita'', Universit\`a degli Studi di Padova, Via Trieste 63, Padova 35121, Italy.} , Paolo Musolino\thanks{Dipartimento di Scienze Molecolari e Nanosistemi, Universit\`a Ca' Foscari Venezia, via Torino 155, 30172 Venezia Mestre, Italy}}
\date{20220303PL\_shapeint}
\begin{document}

\maketitle

\noindent

{\bf Abstract:}   We study the effect of regular and singular domain perturbations on layer potential operators for the Laplace equation. %
First, we consider layer potentials supported on a diffeomorphic image $\phi(\partial\Omega)$ of a reference set $\partial\Omega$ and we present some real analyticity results for the dependence upon the map $\phi$. Then we introduce a perforated domain $\Omega(\epsilon)$ with a small hole of size $\epsilon$ and we compute power series expansions that describe the layer potentials on $\partial\Omega(\epsilon)$ when the parameter $\epsilon$ approximates the degenerate value $\epsilon=0$.

\vspace{9pt}

\noindent
{\bf Keywords:}  single layer potential, double layer potential, Laplace operator, domain perturbation, shape sensitivity analysis, perforated domain, asymptotic behavior, special nonlinear operators.
\vspace{9pt}

\noindent   
{{\bf 2020 Mathematics Subject Classification:}}  31B10; 35J05; 47H30; 35J25; 45A05.

\section{Introduction}\label{sec:intro}

Potential theory is a valuable tool to analyze boundary value problems for elliptic differential equations and systems, both to deduce theoretical results and to perform numerical computations. Indeed, layer potentials can be used to convert boundary value problems into systems of integral equations that are often easier to study than the original problems. In recent times, potential theoretic techniques have been successfully employed to analyze boundary value problems on perturbed domains. In view of this application, it is important to understand what happens to the layer potentials when we perturb the support of integration. In this paper we look at this problem in the terms of the following question: what is the regularity of the maps that take the perturbation parameters to the corresponding layer potential operators?

To try to give an answer, we will consider the layer potentials related to the Laplace equation and we  will study two different kind of perturbations,  one that we call ``regular," because we don't have loss of regularity in the perturbed sets,  and one that we call ``singular,'' because we do have some kind of loss of regularity in the perturbed sets.  More specifically, as an example of a regular perturbation we will have layer potentials supported on a set $\phi(\partial\Omega)$ that is a diffeormorphic image of the boundary $\partial\Omega$ of a reference set $\Omega$. In this case the perturbation parameter is the map $\phi$ and our goal is to understand the regularity of the map that takes $\phi$, which we think as an element of a suitable Banach space of functions, to the corresponding layer potential operators, which we think as elements of suitable operator spaces. Instead, to make an example of a singular perturbation, we will analyze layer potentials supported on a set $\partial\Omega(\epsilon)$ with $\Omega(\epsilon)$   obtained removing from a fixed domain $\Omega$ an interior portion of size   $\epsilon>0$. This perturbation is ``singular'' because for $\epsilon=0$ the  set $\Omega(\epsilon)$   loses regularity on account of a removed point in its iterior. Also in this second case our goal is to understand the regularity of the map that takes the perturbation parameter --in this case $\epsilon$-- to the layer potential operators. In particular, we will focus on the situation where $\epsilon$ varies in a neighborhood of zero.

The interest for the regularity of this kind of maps can be motivated by the applications that they have in the framework of inverse scattering problems. For example, in the works \cite{Po94, Po96a, Po96b} of Potthast, we may find   Fr\'echet differentiability  results for certain layer potentials related to the Helmholtz equation (see also Haddar and Kress \cite{HaKr04} for a further application). Charalambopoulos \cite{Ch95} obtained similar results, but for the layer potentials related to the elastic scattering problem. In the sense introduced above,  the perturbations considered by Potthast and Charalambopoulos are of regular type: they consider a reference set of class $C^2$ that is perturbed into a new set that remains of class $C^2$. The regularity of the sets allows them to keep the analysis in the context of Schauder spaces. The case of Lipshitz domains, instead, was analyzed by Costabel and Le Lou\"er in \cite{CoLe12a, CoLe12b, Le12} in the framework of Sobolev spaces. The family of layer potentials considered by Costabel and Le Lou\"er is quite general and includes the usual boundary integral operators occurring in time-harmonic potential theory.

Now, all the papers listed in the previous paragraph deal with differentiability properties 
and, indeed, regularity results that go beyond the differentiability seem to be much rarer in literature. There are some examples though.  For instance, the recent work on the ``shape holomorphy''  by Henr\'iquez and Schwab \cite{HeSc21}, where they consider the layer potential operators supported on a $C^2$ Jordan curve in $\mathbb{R}^2$. In Henr\'iquez and Schwab's paper a suitable parametrization of the Jordan curve plays the role of the (regular) perturbation parameter, which they think as an element in a complex Banach space,   and, among other results, they show that the Calder\'on projector of the two-dimensional Laplacian is an holomorphic map of such parametrization. The idea of ``shape holomorphy''  was previously introduced in the papers by Jerez-Hanckes, Schwab, and Zech \cite{JeScZe17}, dedicated to the electromagnetic wave scattering problem, and by Cohen, Schwab, and Zech \cite{CoScZe18}, about the stationary Navier-Stokes equations.

Also the present paper's goal is to discuss regularity properties beyond the differentiability. More specifically, our aim is to prove real analyticity results. So, for example, in the first part of the paper, where we consider layer potentials supported on a diffeomorphic image $\phi(\partial \Omega)$ of a reference set $\partial\Omega$,
we show that the map that takes $\phi$ to the corresponding layer potential operators is real analytic. The results of this first part are a direct consequence of the work of Lanza de Cristoforis and Rossi in \cite{LaRo04, LaRo08} and they can be compared with the holomorphy results proven by Henr\'iquez and Schwab in \cite{HeSc21}. Indeed, real analytic maps can be extended to holomorphic maps between reasonable complexifications of the underlying Banach spaces  (see, e.g., the monograph of H\`ayes and Johanis \cite{HaJo14} and the references therein, see also the paragraph after Corollary \ref{cor:anCal}). Although the restriction to the two-dimensional case might not be essential in Henr\'iquez and Schwab paper, we also remark that here we consider all dimensions $n\ge 2$.
 
In addition to the above mentioned papers \cite{LaRo04, LaRo08}, which are dedicated to the layer potentials for the Laplace and  Helmholtz equations,  Lanza de Cristoforis and collaborators have extensively studied this kind of problems in many different directions.  For example, in \cite{DaLa10a} the authors considered  a family of fundamental solutions of second order constant coefficient differential operators  and proved that the corresponding layer potentials depend   real analytically  jointly on the parametrization of the support, the density, and the coefficients of the  operators. We also mention  \cite{Da08}, where a similar result was obtained for higher order operators, and \cite{LaMu11}, for the case of periodic layer potentials.  Moreover, analyticity properties of the layer potentials have been exploited by the authors to analyze the shape dependence of physical quantities arising in fluid mechanics, material sciences, and scattering theory (see \cite{DaLuMu21, DaLuMuPu21, LuMu20, LuMuPu19}).

So, we might say that, as long as it concerns the problem proposed in this paper, regular perturbations are the subject of several works. Singular perturbations, instead,  are widely studied in relation to boundary value problems and inverse problems (see, for example, Ammari and Kang \cite{AmKa07}, Ammari, Kang, and Lee \cite{AmKaLe09},  Maz'ya, Movchan, and Nieves \cite{MaMoNi13}, Maz'ya, Nazarov, and Plamenevskii \cite{MaEtAl00a, MaEtAl00b}, and the references therein), but  seem to be far less studied in relation with the regularity of the layer potential operators maps. An exception is the work carried out by Lanza de Cristoforis and his collaborators with the development of the  so called  ``functional analytic approach'' (see the seminal papers \cite{La02, La05, La07}, see also \cite{DaLaMu21} and the references therein). To illustrate an application of the functional analytic approach we consider a domain $\Omega(\epsilon)$ with a hole of size $\epsilon$. We first show that we can write the layer potential operators in terms of  real analytic maps of $\epsilon$, which are defined in an open neighborhood of $\epsilon=0$, and of  continuous elementary functions of $\epsilon$, which might be not smooth, or even  singular for $\epsilon=0$. Then we focus on the analytic maps and we show how we can compute explicitly the coefficients of the corresponding power series expansions. The technique to compute such coefficients is inspired by the work in \cite{DaMuRo15}, where the computation was carried out in the case of a Dirichlet problem in a domain with a small hole (we incidentally note that a recent paper \cite{FeAm21} by Feppon and Ammari presents a result comparable with that of \cite{DaMuRo15}).

The paper is organized as follows. In Section \ref{s:prel} we introduce some notation, mainly related to layer potentials. In Section \ref{s:reg} we recall the results of Lanza de Cristoforis and Rossi \cite{LaRo04,LaRo08} on regular domain perturbations and we deduce some other analyticity results. We also include a paragraph where we discuss the relation between real analyticity and holomorphy. In Section \ref{s:sin},  we consider singular domain perturbations and, after having deduced representations in terms of known elementary functions and real analytic maps, we show an explicit and  constructive way to compute all the coefficients of the corresponding power series expansions.

\section{Layer potentials for the Laplace equation}\label{s:prel}
 
 In this section, we introduce the layer potentials (and associated operators) for the Laplace equation. In order to do so, we fix
 \[
 n \in \mathbb{N} \setminus \{0,1\}\, 
 \]
and we take
\[
\begin{split}
&\text{$\alpha \in \mathopen]0,1[$ and a bounded open connected subset $\tilde{\Omega}$ of $\mathbb{R}^{n}$ of  class  $C^{1,\alpha}$}.
\end{split}
\]
 For the definition of sets and functions of the Schauder class $C^{j,\alpha}$ ($j \in \mathbb{N}$)  we refer, e.g., to Gilbarg and
Trudinger~\cite{GiTr83}.

Let
$G_{n}$ be the function from ${\mathbb{R}}^{n}\setminus\{0\}$ to ${\mathbb{R}}$ defined by
\[
G_{n}(x)\equiv
\left\{
\begin{array}{lll}
-\frac{1}{s_{2}}\log |x| \qquad &   \forall x\in
{\mathbb{R}}^{n}\setminus\{0\},\quad & {\mathrm{if}}\ n=2\,,
\\
\frac{1}{(n-2)s_{n}}|x|^{2-n}\qquad &   \forall x\in
{\mathbb{R}}^{n}\setminus\{0\},\quad & {\mathrm{if}}\ n\geq3\,,
\end{array}
\right.
\]
where $s_n$ denotes the $(n-1)$-dimensional measure of the unit sphere in $\mathbb{R}^n$. The function
$G_{n}$ is well-known to be a
fundamental solution of $-\Delta\equiv -\sum_{i=1}^n\partial_{x_j}^2$. \par

We now introduce the single layer potential. If $\mu\in C^{0}(\partial\tilde{\Omega})$, we set 
\begin{equation*}
\mathcal{S}_{\tilde{\Omega}}[\mu](x)\equiv
\int_{\partial\tilde{\Omega}}G_n(x-y)\mu(y)\,d\sigma_{y}
\qquad\forall x\in {\mathbb{R}}^{n}\,,
\end{equation*}
 where  $d\sigma$ denotes the area element of a $(n-1)$-dimensional manifold imbedded in ${\mathbb{R}}^{n}$. As is well-known, if $\mu\in C^{0}(\partial{\tilde{\Omega}})$, then $\mathcal{S}_{\tilde{\Omega}}[\mu]$ is continuous in  ${\mathbb{R}}^{n}$. Moreover, if $\mu\in C^{0,\alpha}(\partial\tilde{\Omega})$, then the function  
 $\mathcal{S}^{\mathrm{int}}_{\tilde{\Omega}}[\mu]\equiv \mathcal{S}_{\tilde{\Omega}}[\mu]_{|\overline{\tilde{\Omega}}}$ belongs to $C^{1,\alpha}(\overline{\tilde{\Omega}})$, and the function 
$\mathcal{S}^{\mathrm{ext}}_{\tilde{\Omega}}[\mu]\equiv \mathcal{S}_{\tilde{\Omega}}[\mu]_{|\mathbb{R}^n \setminus \tilde{\Omega}}$ belongs to $C^{1,\alpha}_{\mathrm{loc}}
(\mathbb{R}^n \setminus \tilde{\Omega})$. As usual,  $\overline{A}$ denotes  the  closure of a set $A$.

Similarly, we introduce the double layer potential. If $\psi\in C^{0}(\partial\tilde{\Omega})$, we set
\begin{equation*}
\mathcal{D}_{\tilde{\Omega}}[\psi](x)\equiv
-\int_{\partial\tilde{\Omega}}\nu_{\tilde{\Omega}}(y) \cdot \nabla  G_n(x-y)\psi(y)\,d\sigma_{y}
\qquad\forall x\in {\mathbb{R}}^{n}\,,
\end{equation*}
where $\nu_{\tilde{\Omega}}$ denotes the outer unit normal to $\partial{\tilde{\Omega}}$  and the symbol ``$\cdot$'' denotes the scalar product in $\mathbb{R}^n$.   As is well known, if $\psi\in C^{1,\alpha}(\partial\tilde{\Omega})$ the restriction $\mathcal{D}_{\tilde{\Omega}}[\psi]_{|\tilde{\Omega}}$ extends to a function $\mathcal{D}^{\mathrm{int}}_{\tilde{\Omega}}[\psi]$  in  $C^{1,\alpha}(\overline{\tilde{\Omega}})$ and  the restriction $\mathcal{D}_{\tilde{\Omega}}[\psi]_{|\mathbb{R}^n\setminus\overline{\tilde{\Omega}}}$ extends to a function $\mathcal{D}^{\mathrm{ext}}_{\tilde{\Omega}}[\psi]$  in  $C^{1,\alpha}_{\mathrm{loc}}(\mathbb{R}^n\setminus\tilde{\Omega})$. We observe that the symbols $\mathcal{D}^{\mathrm{int}}_{\tilde{\Omega}}[\psi]$ and $\mathcal{D}^{\mathrm{ext}}_{\tilde{\Omega}}[\psi]$ denote the extensions of the restrictions of the double layer potential to the closure of the interior and of the exterior of $\tilde{\Omega}$, respectively.

Next, we introduce two operators  associated with the boundary trace of the double layer potential and of the normal derivative of the single layer potential.  Let 
\begin{equation}\label{Kdef}
\mathcal{K}_{\tilde{\Omega}}[\psi](x)\equiv \mathcal{D}_{\tilde{\Omega}}[\psi]_{|\partial\tilde\Omega}(x) = - \int_{\partial\tilde{\Omega}}\nu_{\tilde{\Omega}}(y) \cdot \nabla G_n(x-y)\psi(y)\,d\sigma_{y}\qquad\forall x\in\partial\tilde{\Omega}\,,
\end{equation}
 for all $\psi\in C^{1,\alpha}(\partial\tilde{\Omega})$, and
\begin{equation}\label{K'def}
\mathcal{K}'_{\tilde{\Omega}}[\mu](x)\equiv \int_{\partial\tilde{\Omega}}\nu_{\tilde{\Omega}}(x) \cdot \nabla G_n(x-y)\mu(y)\,d\sigma_{y}\qquad\forall x\in\partial\tilde{\Omega}\,,
\end{equation} 
for all $\mu\in C^{0,\alpha}(\partial{\tilde{\Omega}})$. As it is well-known from classical potential theory,  $\mathcal{K}_{\tilde{\Omega}}$ is a compact operator  from   $C^{1,\alpha}(\partial{\tilde{\Omega}})$  to itself and $\mathcal{K}'_{\tilde{\Omega}}$   is a compact operator  from  $C^{0,\alpha}(\partial{\tilde{\Omega}})$ to itself (see Schauder \cite{Sc31, Sc32}). Also, the operators $\mathcal{K}_{\tilde{\Omega}}$ and $\mathcal{K}'_{\tilde{\Omega}}$ are adjoint one to the other with respect to the duality on $C^{1,\alpha}(\partial{\tilde{\Omega}})\times C^{0,\alpha}(\partial{\tilde{\Omega}})$ induced by the inner product of the Lebesgue space $L^2(\partial{\tilde{\Omega}})$ (cf.,  e.g., Kress \cite[Chap.~4]{Kr14}). Moreover,  the following jump formulas, describing the boundary behavior of the layer potentials with the corresponding boundary operators, hold. 
\begin{align*}
\mathcal{D}^{\mathrm{int}}_{{\tilde{\Omega}}}[\psi]_{|\partial{\tilde{\Omega}}}&=-\frac{1}{2}\psi+\mathcal{K}_{\tilde{\Omega}}[\psi]&\forall\psi\in C^{1,\alpha}(\partial{\tilde{\Omega}})\,,\\
 \mathcal{D}^{\mathrm{ext}}_{{\tilde{\Omega}}}[\psi]_{|\partial{\tilde{\Omega}}}&=\frac{1}{2}\psi+\mathcal{K}_{\tilde{\Omega}}[\psi]&\forall\psi\in C^{1,\alpha}(\partial{\tilde{\Omega}})\,,\\
\nu_{\tilde{\Omega}}\cdot\nabla \mathcal{S}^{\mathrm{int}}_{{\tilde{\Omega}}}[\mu]_{|\partial{\tilde{\Omega}}}&=\frac{1}{2}\mu+\mathcal{K}'_{\tilde{\Omega}}[\mu] &\forall\mu\in C^{0,\alpha}(\partial{\tilde{\Omega}})\, ,\\
 \nu_{\tilde{\Omega}}\cdot\nabla \mathcal{S}^{\mathrm{ext}}_{{\tilde{\Omega}}}[\mu]_{|\partial{\tilde{\Omega}}}&=-\frac{1}{2}\mu+\mathcal{K}'_{\tilde{\Omega}}[\mu] &\forall\mu\in C^{0,\alpha}(\partial{\tilde{\Omega}})\, 
\end{align*}
(see,  e.g., Folland \cite[Chap.~3]{Fo95}).

 Finally, we also set
\begin{equation}\label{Vdef}
\mathcal{V}_{\tilde{\Omega}}[\mu](x)\equiv\mathcal{S}_{\tilde{\Omega}}[\mu](x) \qquad \forall x \in \partial \tilde{\Omega}\, ,
\end{equation}
for all $\mu\in C^{0,\alpha}(\partial{\tilde{\Omega}})$, and
\begin{equation}\label{Wdef}
\mathcal{W}_{\tilde{\Omega}}[\psi](x)\equiv- \nu_{\tilde{\Omega}}(x)\cdot \nabla \mathcal{D}^{\mathrm{ext}}_{{\tilde{\Omega}}}[\psi](x)= -\nu_{\tilde{\Omega}}(x)\cdot \nabla \mathcal{D}^{\mathrm{int}}_{{\tilde{\Omega}}}[\psi](x)\qquad \forall x \in \partial \tilde{\Omega}\, ,
\end{equation}
for all $\psi\in C^{1,\alpha}(\partial{\tilde{\Omega}})$ (see, e.g., \cite[Thm. 4.31 (iii)]{DaLaMu21}). Clearly, $\mathcal{V}_{\tilde{\Omega}}[\mu] \in C^{1,\alpha}(\partial{\tilde{\Omega}})$ for all $\mu\in C^{0,\alpha}(\partial{\tilde{\Omega}})$ and $\mathcal{W}_{\tilde{\Omega}}[\psi] \in C^{0,\alpha}(\partial{\tilde{\Omega}})$ for all $\psi\in C^{1,\alpha}(\partial{\tilde{\Omega}})$.


\section{Regular perturbations and shape analyticity}\label{s:reg}

In this section we consider layer potentials supported on the diffeomorphic image of a reference set. We show some results of Lanza de Cristoforis and Rossi \cite{LaRo04, LaRo08} on the real analyticity of the maps that take the parametrization to the corresponding layer potentials. From these results we  deduce some analyticity results for the corresponding operators.

We now introduce the geometry of the problem. We fix
\begin{equation}\label{Omega_def}
\begin{split}
&\text{$\alpha \in \mathopen]0,1[$ and a bounded open connected subset $\Omega$ of $\mathbb{R}^n$ of  class  $C^{1,\alpha}$}
\\
&\text{\hspace{2.5cm}such that $\mathbb{R}^n\setminus\overline{\Omega}$ is  connected.}
\end{split}
\end{equation}

To consider shape perturbations of layer potential operators, we take the set $\Omega$ of \eqref{Omega_def} as a reference set. Then we  introduce a specific class $\mathcal{A}^{1,\alpha}_{\partial \Omega}$ of $C^{1,\alpha}$-diffeomorphisms from $\partial\Omega$ to $\mathbb{R}^n$: $\mathcal{A}^{1,\alpha}_{\partial \Omega}$ is the set of functions of class $C^{1,\alpha}(\partial\Omega, \mathbb{R}^n)$ that are injective and have injective differential at all points  of $\partial\Omega$. By Lanza de Cristoforis and Rossi \cite[Lem. 2.2, p. 197]{LaRo08}  
and \cite[Lem. 2.5, p. 143]{LaRo04},  we can see that $\mathcal{A}^{1,\alpha}_{\partial \Omega}$ is open 
in $ C^{1,\alpha}(\partial\Omega, \mathbb{R}^n)$. Moreover, for all $\phi \in \mathcal{A}^{1,\alpha}_{\partial \Omega}$ the Jordan-Leray separation theorem ensures that 
$\mathbb{R}^n\setminus \phi(\partial \Omega)$  has exactly two open connected components (see, e.g., Deimling \cite[Thm.  5.2, p. 26]{De85}  and \cite[\S A.4]{DaLaMu21}). We denote  by $\mathbb{I}[\phi]$ the bounded connected component of $\mathbb{R}^n\setminus \phi(\partial \Omega)$ and by
$\mathbb{E}[\phi]$ the  unbounded one. Then, we have
$\mathbb{E}[\phi] =\mathbb{R}^n \setminus \overline{\mathbb{I}[\phi]}$ and  $\overline{\mathbb{E}[\phi]} =\mathbb{R}^n \setminus \mathbb{I}[\phi]$ (see Figure \ref{fig:DiffeoOm}).

\begin{figure}[htb]
\centering
\includegraphics[width=4.2in]{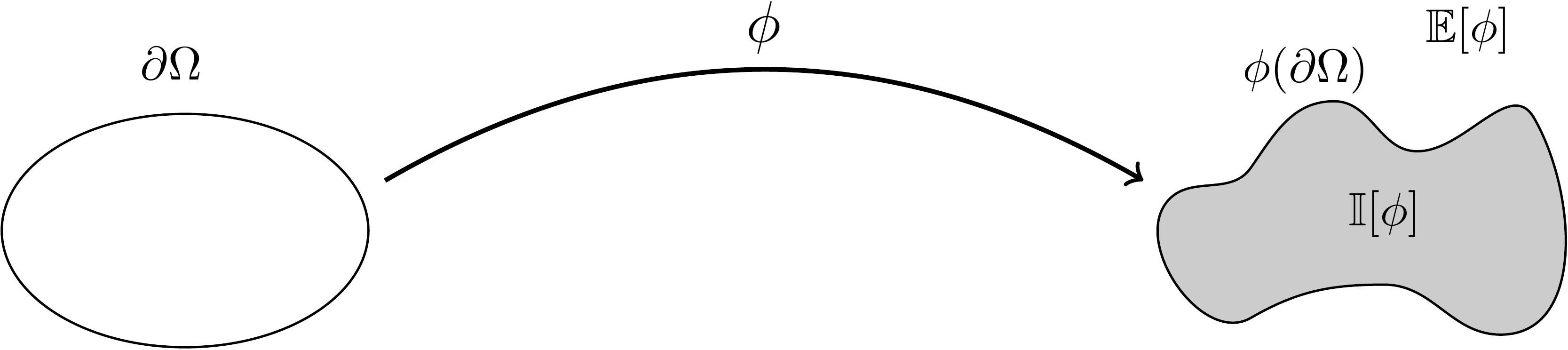}\\
\begin{center}
\caption{{\it The diffeomorphism  $\phi \in \mathcal{A}^{1,\alpha}_{\partial \Omega}$ and the $\phi$-dependent sets $\phi(\partial\Omega)$, $\mathbb{I}[\phi]$ and $\mathbb{E}[\phi]$.}}
		\label{fig:DiffeoOm}
\end{center}
\end{figure}

 We will think at the diffeomorphism $\phi$ as a point in the Banach space $C^{1,\alpha}(\partial\Omega, \mathbb{R}^n)$ and we want to see that the maps that take $\phi\in \mathcal{A}^{1,\alpha}_{\partial \Omega}\subseteq C^{1,\alpha}(\partial\Omega, \mathbb{R}^n)$ to the operators $\mathcal{V}_{\mathbb{I}[\phi]}$, $\mathcal{K}_{\mathbb{I}[\phi]}$, $\mathcal{K}'_{\mathbb{I}[\phi]}$, and $\mathcal{W}_{\mathbb{I}[\phi]}$ are, in a sense, real analytic. We observe, however, that these operators are elements of spaces that depend on $\phi$. For example, $\mathcal{V}_{\mathbb{I}[\phi]}$ belongs to 
\[
\mathcal{L}(C^{0,\alpha}(\phi(\partial\Omega)), C^{1,\alpha}(\phi(\partial\Omega)))\,.
\]
So, to have real analytic maps between fixed Banach spaces we  ``pull-back'' the operators to the reference set $\partial\Omega$. For example, for a diffeomorphism $\phi\in \mathcal{A}^{1,\alpha}_{\partial \Omega}$, we denote by $\mathcal{V}_\phi$ the operator that takes a density function $\mu\in C^{0,\alpha}(\partial\Omega)$ to $\mathcal{V}_{\mathbb{I}[\phi]}[\mu\circ\phi^{(-1)}]\circ\phi$. Namely, we set 
\[
\mathcal{V}_{\phi} [\mu]\equiv \mathcal{V}_{\mathbb{I}[\phi]}[\mu \circ \phi^{(-1)}] \circ \phi \qquad \forall \mu \in C^{0,\alpha}(\partial\Omega)\, .
\]
Then we see that $\mathcal{V}_{\phi}$ is an element of the space
\[
\mathcal{L}(C^{0,\alpha}(\partial\Omega), C^{1,\alpha}(\partial\Omega))\,,
\]
which does not depend on $\phi$, and it makes sense to ask if the map $\phi\mapsto\mathcal{V}_\phi$ is real analytic. (We refer, e.g., to Deimling \cite[\S15]{De85} for the definition of real analytic maps between Banach spaces.) Similarly, we denote by $\mathcal{K}_{\phi}$ the element of $\mathcal{L}(C^{1,\alpha}(\partial\Omega), C^{1,\alpha}(\partial\Omega))$ such that
\[
\mathcal{K}_{\phi} [\psi]\equiv \mathcal{K}_{\mathbb{I}[\phi]}[\psi \circ \phi^{(-1)}] \circ \phi \qquad \forall \psi \in C^{1,\alpha}(\partial\Omega)\,,
\]
we denote by $\mathcal{K}'_{\phi}$ the element of $\mathcal{L}(C^{0,\alpha}(\partial\Omega), C^{0,\alpha}(\partial\Omega))$ defined by
\[
\mathcal{K}'_{\phi} [\mu]\equiv \mathcal{K}'_{\mathbb{I}[\phi]}[\mu \circ \phi^{(-1)}] \circ \phi \qquad \forall \mu \in C^{0,\alpha}(\partial\Omega)\,,
\]
and by $\mathcal{W}_{\phi}$ the element of $\mathcal{L}(C^{1,\alpha}(\partial\Omega), C^{0,\alpha}(\partial\Omega))$ defined by
\[
\mathcal{W}_{\phi} [\psi]\equiv \mathcal{W}_{\mathbb{I}[\phi]}[\psi \circ \phi^{(-1)}] \circ \phi \qquad \forall \psi \in C^{1,\alpha}(\partial\Omega)\, .
\]

In the following Lemma \ref{lem:anSLP} we present some results from  Lanza de Cristoforis and Rossi \cite{LaRo04, LaRo08}.

\begin{lemma}\label{lem:anSLP}
Let $\alpha$, $\Omega$ be as in \eqref{Omega_def}.  
Then the following statements hold.  
\begin{itemize}
 \item[(i)] The map from $\mathcal{A}^{1,\alpha}_{\partial \Omega}\times C^{0,\alpha}(\partial\Omega)$ to  $C^{1,\alpha}(\partial\Omega)$ that takes a pair $(\phi,\mu)$ to the function $\mathcal{V}_{\mathbb{I}[\phi]}[\mu \circ \phi^{(-1)}] \circ \phi$ is real analytic.
 \item[(ii)] The map from $\mathcal{A}^{1,\alpha}_{\partial \Omega} \times C^{1,\alpha}(\partial\Omega)$ to  $C^{1,\alpha}(\partial\Omega)$ that takes a pair $(\phi,\psi)$ to the function $\mathcal{K}_{\mathbb{I}[\phi]}[\psi \circ \phi^{(-1)}] \circ \phi$ is real analytic.
\item[(iii)] The map from $\mathcal{A}^{1,\alpha}_{\partial \Omega} \times C^{0,\alpha}(\partial\Omega)$ to  $C^{0,\alpha}(\partial\Omega)$ that takes a pair $(\phi,\mu)$ to the function $\mathcal{K}'_{\mathbb{I}[\phi]}[\mu \circ \phi^{(-1)}] \circ \phi$ is real analytic.
\item[(iv)] The map from $\mathcal{A}^{1,\alpha}_{\partial \Omega} \times C^{1,\alpha}(\partial\Omega)$ to  $C^{0,\alpha}(\partial\Omega)$ that takes a pair $(\phi,\psi)$ to the function $\mathcal{W}_{\mathbb{I}[\phi]}[\psi \circ \phi^{(-1)}] \circ \phi$ is real analytic.
\end{itemize}
\end{lemma}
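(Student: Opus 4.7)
The plan is to reduce all four statements to the abstract real analyticity theorems of Lanza de Cristoforis and Rossi in \cite{LaRo04,LaRo08} by pulling the integrals back to the fixed boundary $\partial\Omega$. For (i), using the change of variables $y=\phi(s)$, the pulled-back single layer operator reads
\[
\mathcal{V}_\phi[\mu](t) = \int_{\partial\Omega} G_n\bigl(\phi(t)-\phi(s)\bigr)\,\mu(s)\,\tilde{J}[\phi](s)\,d\sigma_s \qquad \forall t\in\partial\Omega,
\]
where $\tilde{J}[\phi]$ is the tangential Jacobian of $\phi$, i.e., the square root of the Gram determinant of the tangential differential of $\phi$. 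Because every $\phi\in\mathcal{A}^{1,\alpha}_{\partial\Omega}$ has injective differential, $\tilde{J}[\phi]$ is bounded from below by a positive constant, and the map $\phi\mapsto\tilde{J}[\phi]$ from $\mathcal{A}^{1,\alpha}_{\partial\Omega}$ into $C^{0,\alpha}(\partial\Omega)$ is real analytic by the standard calculus of analytic maps between Banach spaces (sums, products, and compositions with real analytic functions on open subsets where the argument stays). An analogous argument shows that the pulled-back outer unit normal $\nu_{\mathbb{I}[\phi]}\circ\phi$ is a real analytic function of $\phi$ with values in $C^{0,\alpha}(\partial\Omega,\mathbb{R}^n)$, since it is a rational expression in the first partial derivatives of $\phi$ with non-vanishing denominator. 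This reduces (ii) and (iii) to the analogous kernel-pullback statement with $G_n$ replaced by components of $\nabla G_n$.

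The core analytic problem is then to prove that the pulled-back singular integral operator
\[
(\phi,\mu)\mapsto\Bigl(t\mapsto\int_{\partial\Omega} K\bigl(\phi(t)-\phi(s)\bigr)\,\mu(s)\,d\sigma_s\Bigr)
\]
depends real analytically on $\phi$ between the appropriate Schauder spaces on $\partial\Omega$, with $K$ either $G_n$ itself (weakly singular) or one of its first derivatives (strongly singular). This is precisely the content of the main theorems of \cite{LaRo04,LaRo08}: one splits $K$ into a cut-off near-diagonal part and a smooth far-diagonal part, the far-diagonal piece being treated trivially by composition with smooth kernels, while for the near-diagonal piece one writes
\[
\phi(t)-\phi(s) = M[\phi](t,s)\,(t-s),
\]
with $M[\phi](t,s)=\int_0^1 d\phi\bigl(s+\tau(t-s)\bigr)\,d\tau$ read in local tangent coordinates on $\partial\Omega$. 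This factorization extracts the singularity as the canonical kernel $|t-s|^{2-n}$ or $|t-s|^{1-n}$, multiplied by a factor that depends on $\phi$ through $M[\phi]$ and its inverse, hence real analytically on the open set where the differential of $\phi$ is injective. The abstract theorems of \cite{LaRo04,LaRo08} assemble these ingredients into operator-norm real analyticity on $\mathcal{A}^{1,\alpha}_{\partial\Omega}$.

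For statement (iv) on the hypersingular operator $\mathcal{W}_\phi$, whose kernel is not absolutely integrable, I would reduce to (i) and (ii) via a Maue-type integration-by-parts identity expressing $\nu\cdot\nabla\mathcal{D}_{\mathbb{I}[\phi]}[\psi]_{|\phi(\partial\Omega)}$ in terms of a single layer acting on a tangential derivative of $\psi$; pulled back through $\phi$, the tangential derivative operators on $\phi(\partial\Omega)$ depend analytically on $\phi$ via the inverse first fundamental form, so $\mathcal{W}_\phi$ emerges as a composition of real analytic maps. The main obstacle throughout is the delicate control near the diagonal: obtaining real analyticity rather than mere $C^\infty$ dependence requires geometric bounds on Taylor expansions of all orders of $\phi\mapsto K(\phi(\cdot)-\phi(\cdot))$ in a $C^{1,\alpha}$-neighborhood of a base diffeomorphism $\phi_0$, uniformly in the singular variables. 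Providing these uniform estimates is the technical heart of \cite{LaRo04,LaRo08}, and once invoked it yields all four statements simultaneously.
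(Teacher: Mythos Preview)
The paper does not actually prove this lemma: it is stated without proof as a direct citation of results from Lanza de Cristoforis and Rossi \cite{LaRo04,LaRo08}. Your sketch is a correct outline of the strategy underlying those references---pull back to $\partial\Omega$, show the Jacobian and normal depend analytically on $\phi$, and invoke the near-diagonal/far-diagonal analysis of \cite{LaRo04,LaRo08} for the singular kernel---so your approach is essentially the same as (indeed, an expansion of) what the paper does, and no further comparison is needed.
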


By Lemma \ref{lem:anSLP} we deduce the validity of the following theorem, where we show that the  operators $\mathcal{V}_{\phi}$, $\mathcal{K}_{\phi}$, $\mathcal{K}'_{\phi}$, and $\mathcal{W}_{\phi}$ depend real analytically on $\phi$.

 \begin{theorem}\label{thm:anlpop}
Let $\alpha$, $\Omega$ be as in \eqref{Omega_def}.  
Then the following statements hold.  
\begin{itemize}
 \item[(i)] The map from $\mathcal{A}^{1,\alpha}_{ \partial \Omega}$ to  $\mathcal{L}(C^{0,\alpha}(\partial\Omega), C^{1,\alpha}(\partial\Omega))$ that takes $\phi$ to  $\mathcal{V}_{\phi}$ is real analytic.
 \item[(ii)] The map from $\mathcal{A}^{1,\alpha}_{\partial \Omega}$ to  $\mathcal{L}(C^{1,\alpha}(\partial\Omega), C^{1,\alpha}(\partial\Omega))$ that takes $\phi$ to   $\mathcal{K}_{\phi}$ is real analytic.
\item[(iii)] The map from $\mathcal{A}^{1,\alpha}_{\partial \Omega}$ to  $\mathcal{L}(C^{0,\alpha}(\partial\Omega), C^{0,\alpha}(\partial\Omega))$ that takes $\phi$ to   $\mathcal{K}'_{\phi}$ is real analytic.
\item[(iv)] The map from $\mathcal{A}^{1,\alpha}_{\partial \Omega}$ to  $\mathcal{L}(C^{1,\alpha}(\partial\Omega), C^{0,\alpha}(\partial\Omega))$ that takes $\phi$ to   $\mathcal{W}_{\phi}$ is real analytic.
\end{itemize}
\end{theorem}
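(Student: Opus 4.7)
The plan is to deduce Theorem \ref{thm:anlpop} directly from Lemma \ref{lem:anSLP} by extracting the linear dependence on the density out of the jointly real analytic map. Consider statement (i): by the very definition of $\mathcal{V}_\phi$ one has $\mathcal{V}_\phi[\mu]=\mathcal{V}_{\mathbb{I}[\phi]}[\mu\circ\phi^{(-1)}]\circ\phi$, so Lemma \ref{lem:anSLP}(i) asserts that the map
\[
F\colon \mathcal{A}^{1,\alpha}_{\partial\Omega}\times C^{0,\alpha}(\partial\Omega)\to C^{1,\alpha}(\partial\Omega)\,,\qquad F(\phi,\mu):=\mathcal{V}_\phi[\mu]\,,
\]
is real analytic. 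Moreover, for each fixed $\phi$ the map $\mu\mapsto F(\phi,\mu)$ is linear and continuous, since $\mathcal{V}_{\mathbb{I}[\phi]}$ is linear in its density and pre/post-composition with $\phi^{(-1)}$ and $\phi$ is linear in $\mu$. The whole statement will then follow from a general principle: \emph{if $F\colon U\times X\to Y$ is real analytic between open sets in Banach spaces and is linear in its second argument, then the operator-valued map $U\ni u\mapsto F(u,\cdot)\in\mathcal{L}(X,Y)$ is real analytic.}

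To verify this principle I would fix $u_0\in U$ and expand $F$ in the joint variable,
\[
F(u_0+h,\mu)=\sum_{n=0}^\infty P_n(h,\mu)\,,
\]
where each $P_n$ is a continuous $n$-homogeneous polynomial on $E\times X$ and the series converges absolutely on a bipolydisc $\{\|h\|<r,\ \|\mu\|<r\}$. Decomposing by bidegree, $P_n=\sum_{i+j=n}P_n^{(i,j)}$ with $P_n^{(i,j)}$ being $i$-homogeneous in $h$ and $j$-homogeneous in $\mu$, the hypothesis that $F(u_0+h,\cdot)$ is linear in $\mu$ combined with uniqueness of the power series expansion in $\mu$ (for each fixed $h$) forces $P_n^{(i,j)}\equiv 0$ whenever $j\neq 1$. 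Setting $A_i(h)[\mu]:=P_{i+1}^{(i,1)}(h,\mu)$ one obtains a continuous $i$-homogeneous polynomial $A_i\colon E\to\mathcal{L}(X,Y)$, and standard Cauchy-type estimates for symmetric multilinear forms on the bipolydisc yield a bound of the form $\|A_i(h)\|_{\mathcal{L}(X,Y)}\le C\,r^{-i-1}\|h\|^i$ after possibly shrinking $r$. This delivers convergence of $\sum_i A_i(h)$ in the operator norm on a neighborhood of $0$ in $E$, which is precisely real analyticity of $u\mapsto F(u,\cdot)$.

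Applying this principle to each of the four items of Lemma \ref{lem:anSLP} yields the four statements (i)--(iv) of Theorem \ref{thm:anlpop}; items (ii) and (iv) use the same argument with $C^{1,\alpha}(\partial\Omega)$ in place of $C^{0,\alpha}(\partial\Omega)$ as the density space, and item (iii) with $C^{0,\alpha}(\partial\Omega)$ as the target. The main technical obstacle is the Cauchy-type estimate in the last step: although it is intuitively clear that joint analyticity plus linearity in one variable should upgrade to operator-valued analyticity, one must be careful that the convergence takes place in the uniform operator topology rather than merely pointwise in the density. Once the bidegree decomposition and the polynomial Cauchy estimates are in hand, the theorem is essentially an immediate corollary of Lemma \ref{lem:anSLP}.
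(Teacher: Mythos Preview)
Your proposal is correct and rests on the same two ingredients as the paper's proof: Lemma \ref{lem:anSLP} for joint real analyticity of $(\phi,\mu)\mapsto \mathcal{V}_\phi[\mu]$, and linearity in the density $\mu$. The difference lies only in how you pass from ``jointly analytic and linear in the second slot'' to ``operator-valued analytic in the first slot.'' You prove this general principle by hand, via a bidegree decomposition of the power series and Cauchy-type estimates to secure convergence in the operator norm. The paper instead short-circuits this step with a one-line observation: since $F(\phi,\mu)=\mathcal{V}_\phi[\mu]$ is linear in $\mu$, one has $\mathcal{V}_\phi = d_\mu F(\phi,\mu)$ for every $\mu$, and the partial Fr\'echet differential of a real analytic map is itself real analytic as a map into the space of bounded linear operators; since $d_\mu F$ does not actually depend on $\mu$, analyticity in $\phi$ alone follows. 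Your argument is self-contained and makes the mechanism explicit, while the paper's is shorter because it delegates the work to the standard fact that differentiation preserves real analyticity.
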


\begin{proof}
We prove only statement (i). The proof of statements (ii)-(iv) can be effected similarly and is accordingly left to the reader. By Lemma \ref{lem:anSLP} the map 
     \[
\mathcal{A}^{1,\alpha}_{\partial \Omega}\times  C^{0,\alpha}(\partial\Omega)\ni     (\phi,\mu)\mapsto \mathcal{V}^\sharp(\phi,\mu)\equiv \mathcal{V}_{\mathbb{I}[\phi]}[\mu \circ \phi^{(-1)}] \circ \phi \in C^{1,\alpha}(\partial\Omega)
     \]
      is real analytic.  Since $\mathcal{V}^\sharp$ is linear and continuous with respect to the variable $\mu$, we have
\[
\mathcal{V}_{\phi^\sharp} =d_\mu \mathcal{V}^\sharp(\phi^\sharp,\mu^\sharp)
\qquad\forall (\phi^\sharp, \mu^\sharp)\in \mathcal{A}^{1,\alpha}_{\partial \Omega}\times  C^{0,\alpha}(\partial\Omega)\,.
\]
Since the right-hand side equals a partial Fr\'{e}chet differential of a  map which is real analytic by Lemma \ref{lem:anSLP} (i), the right-hand side is analytic on $(\phi^\sharp,\mu^\sharp)$. Hence $(\phi^\sharp,\mu^\sharp) \mapsto \mathcal{V}_{\phi^\sharp}$ is real analytic on $\mathcal{A}^{1,\alpha}_{\partial \Omega} \times C^{0,\alpha}(\partial\Omega)$ and, since it does not depend on $\mu^\sharp$, we conclude that it is real analytic on  $\mathcal{A}^{1,\alpha}_{\partial \Omega}$. 
\end{proof}

As in Henr\'iquez and Schwab \cite{HeSc21}, we now introduce the element $\mathcal{C}_{\phi}$ of $\mathcal{L}(C^{1,\alpha}(\partial\Omega) \times C^{0,\alpha}(\partial\Omega),C^{1,\alpha}(\partial\Omega) \times C^{0,\alpha}(\partial\Omega))$ defined by
\[
\mathcal{C}_{\phi}\equiv
\begin{pmatrix}
\frac{1}{2}I -\mathcal{K}_\phi & \mathcal{V}_\phi\\
\mathcal{W}_\phi & \frac{1}{2}I +\mathcal{K}'_\phi\\
\end{pmatrix}
\]
 for all $\phi \in \mathcal{A}^{1,\alpha}_{\partial \Omega}$. In other words, if  $\phi \in \mathcal{A}^{1,\alpha}_{\partial \Omega}$ and $(\psi,\mu)\in C^{1,\alpha}(\partial\Omega) \times C^{0,\alpha}(\partial\Omega)$, then
 \[
\mathcal{C}_{\phi}[\psi,\mu]\equiv \Bigg(\frac{1}{2}\psi -\mathcal{K}_\phi[\psi] + \mathcal{V}_\phi[\mu],\mathcal{W}_\phi[\psi] + \frac{1}{2}\mu +\mathcal{K}'_\phi[\mu]\Bigg)\, . 
 \] 
 The operator $\mathcal{C}_{\phi}$ is called Calder\'on projector.  In Henr\'iquez and Schwab \cite{HeSc21}, it has been proved the shape holomorphy of the Calder\'on projector for the Laplacian in $\mathbb{R}^2$. Here, by Theorem \ref{thm:anlpop}, we  immediately deduce the validity of the following corollary, where we show that the map that takes $\phi$ to  $\mathcal{C}_\phi$  is real analytic (for the case of arbitrary dimension $n\geq 2$).

  \begin{corollary}\label{cor:anCal}
Let $\alpha$, $\Omega$ be as in \eqref{Omega_def}.  Then the  map from $\mathcal{A}^{1,\alpha}_{\partial \Omega}$ to  $\mathcal{L}(C^{1,\alpha}(\partial\Omega) \times C^{0,\alpha}(\partial\Omega),C^{1,\alpha}(\partial\Omega) \times C^{0,\alpha}(\partial\Omega))$ that takes $\phi$ to the bounded linear operator $\mathcal{C}_\phi$ is real analytic.
\end{corollary}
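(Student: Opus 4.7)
The plan is to reduce the corollary to Theorem \ref{thm:anlpop} by recognizing that the Calder\'on projector $\mathcal{C}_\phi$ is obtained from the four operator-valued maps $\phi\mapsto\mathcal{V}_\phi$, $\phi\mapsto\mathcal{K}_\phi$, $\phi\mapsto\mathcal{K}'_\phi$, $\phi\mapsto\mathcal{W}_\phi$ by a single continuous affine construction (assemble the four entries into a block-matrix operator and add the constant diagonal $\frac{1}{2}I$). Since real analyticity between Banach spaces is preserved under continuous linear maps and under addition of constants, no further analytic estimates are required.

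First I would check that the mapping targets in Theorem \ref{thm:anlpop} match exactly the four slots of $\mathcal{C}_\phi$ acting on $C^{1,\alpha}(\partial\Omega)\times C^{0,\alpha}(\partial\Omega)$: the $(1,1)$-slot holds $\mathcal{K}_\phi\in\mathcal{L}(C^{1,\alpha},C^{1,\alpha})$, the $(1,2)$-slot holds $\mathcal{V}_\phi\in\mathcal{L}(C^{0,\alpha},C^{1,\alpha})$, the $(2,1)$-slot holds $\mathcal{W}_\phi\in\mathcal{L}(C^{1,\alpha},C^{0,\alpha})$, and the $(2,2)$-slot holds $\mathcal{K}'_\phi\in\mathcal{L}(C^{0,\alpha},C^{0,\alpha})$. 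Then I would introduce the natural block-assembly map
\[
T:\mathcal{L}(C^{1,\alpha},C^{1,\alpha})\times\mathcal{L}(C^{0,\alpha},C^{1,\alpha})\times\mathcal{L}(C^{1,\alpha},C^{0,\alpha})\times\mathcal{L}(C^{0,\alpha},C^{0,\alpha})\to \mathcal{L}(Z,Z)
\]
with $Z\equiv C^{1,\alpha}(\partial\Omega)\times C^{0,\alpha}(\partial\Omega)$, sending a quadruple $(A,B,C,D)$ to the operator $(\psi,\mu)\mapsto (A\psi+B\mu,C\psi+D\mu)$. A routine estimate in the operator norm shows that $T$ is linear and bounded, hence real analytic, and one writes
\[
\mathcal{C}_\phi= T\bigl(-\mathcal{K}_\phi,\mathcal{V}_\phi,\mathcal{W}_\phi,\mathcal{K}'_\phi\bigr)+\mathcal{C}_0,
\]
where $\mathcal{C}_0$ is the constant operator $(\psi,\mu)\mapsto(\tfrac12\psi,\tfrac12\mu)$.

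Finally I would conclude by combining the real analyticity of each factor delivered by Theorem \ref{thm:anlpop} with the fact that the Cartesian-product map into a finite product of Banach spaces of real analytic maps is real analytic, and that composition of a real analytic map with a continuous linear map (together with translation by a constant) preserves real analyticity. This gives the real analyticity of $\phi\mapsto\mathcal{C}_\phi$ as required. The only potential obstacle is purely bookkeeping, namely verifying that the block-assembly $T$ really lies in $\mathcal{L}(\prod_i \mathcal{L}(X_i,Y_i),\mathcal{L}(Z,Z))$ with the standard operator-norm topologies; this reduces to the trivial bound $\|T(A,B,C,D)\|\le \|A\|+\|B\|+\|C\|+\|D\|$, so no genuine difficulty arises and the corollary follows immediately from Theorem \ref{thm:anlpop}.
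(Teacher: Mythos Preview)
Your proposal is correct and follows exactly the approach the paper intends: the paper states that the corollary is deduced ``immediately'' from Theorem \ref{thm:anlpop}, and what you have written is precisely the routine unpacking of that word---assembling the four real analytic operator-valued maps into a $2\times 2$ block via a bounded linear map and adding the constant $\tfrac12 I$. There is nothing to add; your bookkeeping verification of the boundedness of the block-assembly map $T$ is the only content, and it is handled correctly.
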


 We can now see that the map $\phi\mapsto{\cal C}_\phi$ has a holomorphic extension. Indeed, it is well known that for a real vector space $X$ we can consider the complexified vector space $\widetilde X=X+iX$ with the operations 
\[
(x+iy)+(u+iv)=(x+u)+i(y+v) 
\]
and
\[
(a+ib)(x+iy)=(ax-by)+i(bx+ay)
\]
for all $x,y,u,v\in X$ and $a,b\in\mathbb{R}$. If in addition $X$ is a normed space, with norm denoted by $\|\cdot\|_X$, then we might want to equip $\widetilde X$ with a norm as well. How to define a norm on $\widetilde X$ is not, however, a trivial task. We can see, for example, that the function
\[
n(x+iy)=\sqrt{\|x\|_X^2+\|y\|_X^2}
\]
is a norm on $\widetilde X$ only when the norm of $X$ comes from an inner product (we can verify that if $n(\cdot)$ is positive homogeneous, then $\|\cdot\|_X$ has the parallelogram property). In \cite{Ta43} Taylor   proposed to consider the function 
\begin{equation}\label{complexnorm}
\|x+iy\|_{\widetilde X}=\sup_{\Phi\in\overline{\mathbb{B}}_{X^*}}\sqrt{\Phi(x)^2+\Phi(y)^2},
\end{equation}
where $\overline{\mathbb{B}}_{X^*}$ is the closed unit ball in $X^*\equiv {\cal L}(X,\mathbb{R})$ (see also Michal and Wyman \cite{MiWy41}).  We can verify that $\|\cdot\|_{\widetilde X}$ is a norm on $\tilde X$ and that $\widetilde X$ with the norm $\|\cdot\|_{\widetilde X}$ is complete as soon as $X$ is complete.  We can also see that the norm in \eqref{complexnorm} can be written as 
\[
\|x+iy\|_{\widetilde X}=\sup_{t\in[0,2\pi]}\left\|(\cos t)x+(\sin t)y\right\|_X
\]
(cf.~Mu\~nos, Sarantopoulos, and Tonge \cite[Eq.~(1)]{MuEtAl99}). In addition,  every {\em reasonable} norm $\|\cdot\|'_{\widetilde X}$ on $\widetilde X$ that satisfies the conditions  
\[
\|x\|'_{\widetilde X}=\|x\|_X\quad \forall x\in X
\]
and 
\[
\|x+iy\|'_{\widetilde X}=\|x-iy\|'_{\widetilde X}\quad  \forall x,y\in X
\]
is equivalent to $\|\cdot\|_{\widetilde X}$ (cf.~Mu\~nos, Sarantopoulos, and Tonge \cite[Prop.~3]{MuEtAl99}). 

For what concerns this paper, we deduce that $C^{1,\alpha}(\partial\Omega,\mathbb{C})$ (the space of $C^{1,\alpha}$ complex valued functions on $\partial\Omega$) coincides algebraically with the complexified space $\widetilde{C^{1,\alpha}(\partial\Omega)}$ and the standard norm on  $C^{1,\alpha}(\partial\Omega,\mathbb{C})$, which is {\em reasonable} in the  sense introduced above, is equivalent to the norm defined by \eqref{complexnorm}.  Similarly, we have 
\[
\widetilde{C^{1,\alpha}(\partial\Omega,\mathbb{R}^n)}=C^{1,\alpha}(\partial\Omega,\mathbb{C}^n)
\]
algebraically and with equivalent norms, and the complexification of  the real Banach space $\mathcal{L}(C^{1,\alpha}(\partial\Omega) \times C^{0,\alpha}(\partial\Omega),C^{1,\alpha}(\partial\Omega) \times C^{0,\alpha}(\partial\Omega))$ coincides algebraically with 
\[
\mathcal{L}(C^{1,\alpha}(\partial\Omega,\mathbb{C}) \times C^{0,\alpha}(\partial\Omega,\mathbb{C}),C^{1,\alpha}(\partial\Omega,\mathbb{C}) \times C^{0,\alpha}(\partial\Omega,\mathbb{C}))
\] 
and has an equivalent norm.

Then, from Corollary \ref{cor:anCal} and from   H\'ajek and Johanis \cite[Thm. 171, p. 75]{HaJo14} (see also Bochnak \cite[ Thm. 5]{Bo70}) we readily deduce the following.

\begin{corollary} Let $\alpha$, $\Omega$ be as in \eqref{Omega_def}. There exist an open subset $\widetilde{\mathcal{A}}^{1,\alpha}_{\partial \Omega}$ of $C^{1,\alpha}(\partial\Omega,\mathbb{C}^n)$ such that ${\mathcal{A}}^{1,\alpha}_{\partial \Omega}=\widetilde{\mathcal{A}}^{1,\alpha}_{\partial \Omega}\cap C^{1,\alpha}(\partial\Omega,\mathbb{R}^n)$ (that is, ${\mathcal{A}}^{1,\alpha}_{\partial \Omega}$ is the subset of the real valued functions of $\widetilde{\mathcal{A}}^{1,\alpha}_{\partial \Omega}$) and a holomorphic map $\widetilde{\cal C}$ from $\widetilde{\mathcal{A}}^{1,\alpha}_{\partial \Omega}$ to 
\[
\mathcal{L}(C^{1,\alpha}(\partial\Omega,\mathbb{C}) \times C^{0,\alpha}(\partial\Omega,\mathbb{C}),C^{1,\alpha}(\partial\Omega,\mathbb{C}) \times C^{0,\alpha}(\partial\Omega,\mathbb{C}))
\]
such that $\widetilde{\cal C}[\phi]=\mathcal{C}_\phi$ for all $\phi\in {\mathcal{A}}^{1,\alpha}_{\partial \Omega}$.
\end{corollary}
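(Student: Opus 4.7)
The plan is to invoke directly the general extension principle, cited in the excerpt as H\'ajek--Johanis \cite[Thm. 171]{HaJo14} (or Bochnak \cite[Thm. 5]{Bo70}), which asserts that every real analytic map between real Banach spaces admits a holomorphic extension to an open neighborhood of its domain in the complexified spaces. The input is Corollary \ref{cor:anCal}, which provides the real analyticity of $\phi\mapsto\mathcal{C}_\phi$ on the open set $\mathcal{A}^{1,\alpha}_{\partial\Omega}\subseteq C^{1,\alpha}(\partial\Omega,\mathbb{R}^n)$.

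First, I would set up the complexifications explicitly using the discussion that immediately precedes the statement. Namely, the complexification $\widetilde{C^{1,\alpha}(\partial\Omega,\mathbb{R}^n)}$ coincides algebraically with $C^{1,\alpha}(\partial\Omega,\mathbb{C}^n)$, and the Taylor norm of \eqref{complexnorm} is equivalent to the standard complex-valued Schauder norm. The analogous identification holds on the target side: the complexification of $\mathcal{L}(C^{1,\alpha}(\partial\Omega)\times C^{0,\alpha}(\partial\Omega),C^{1,\alpha}(\partial\Omega)\times C^{0,\alpha}(\partial\Omega))$ coincides algebraically, and with equivalent norm, with $\mathcal{L}(C^{1,\alpha}(\partial\Omega,\mathbb{C})\times C^{0,\alpha}(\partial\Omega,\mathbb{C}),C^{1,\alpha}(\partial\Omega,\mathbb{C})\times C^{0,\alpha}(\partial\Omega,\mathbb{C}))$. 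Since holomorphy is invariant under equivalent norms, this identification lets us read the conclusion of the extension theorem in the space stated in the corollary.

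Second, I would apply the extension theorem pointwise: around each $\phi\in\mathcal{A}^{1,\alpha}_{\partial\Omega}$, the real power series expansion of $\mathcal{C}_\bullet$ furnished by Corollary \ref{cor:anCal} converges on some real ball, and its natural complexification converges on a complex ball and defines a holomorphic map into the complexified target. I would let $\widetilde{\mathcal{A}}^{1,\alpha}_{\partial\Omega}$ be the union of such complex balls as $\phi$ ranges over $\mathcal{A}^{1,\alpha}_{\partial\Omega}$, which is clearly open in $C^{1,\alpha}(\partial\Omega,\mathbb{C}^n)$ and satisfies $\widetilde{\mathcal{A}}^{1,\alpha}_{\partial\Omega}\cap C^{1,\alpha}(\partial\Omega,\mathbb{R}^n)=\mathcal{A}^{1,\alpha}_{\partial\Omega}$ by construction. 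The local holomorphic extensions are forced to agree on overlaps by the identity principle, since they coincide on the real slice, which is a uniqueness set for holomorphic maps; therefore they glue to a well-defined holomorphic map $\widetilde{\mathcal{C}}$ on $\widetilde{\mathcal{A}}^{1,\alpha}_{\partial\Omega}$ restricting to $\phi\mapsto\mathcal{C}_\phi$ on the real points.

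No significant obstacle is expected: the statement is essentially a formal corollary of the real analyticity already established and the cited extension theorem. The only mild subtlety is bookkeeping between the various complexifications and their Taylor-type norms, but this is precisely what the paragraphs preceding the corollary have already recorded, and the equivalence of norms ensures that the notion of holomorphy transferred from the abstract complexification to the concrete complex-valued Schauder setting is unambiguous.
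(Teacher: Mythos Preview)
Your proposal is correct and follows exactly the paper's approach: the paper simply states that the corollary is ``readily deduced'' from Corollary~\ref{cor:anCal} together with the extension theorem of H\'ajek--Johanis \cite[Thm.~171]{HaJo14} (or Bochnak \cite[Thm.~5]{Bo70}), relying on the preceding paragraphs for the identification of the complexifications. Your write-up is in fact more detailed than the paper's, spelling out the local-to-global gluing via the identity principle and the verification that $\widetilde{\mathcal{A}}^{1,\alpha}_{\partial\Omega}\cap C^{1,\alpha}(\partial\Omega,\mathbb{R}^n)=\mathcal{A}^{1,\alpha}_{\partial\Omega}$, but the underlying argument is the same.
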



\section{Singular perturbations}\label{s:sin}

In this section we consider the effect of a singular perturbation produced by a small perforation in the domain that is bounded by the support of integration.

We fix
\begin{equation}\label{eq:ass:sing}
	\begin{split}
		&\mbox{$\alpha \in \mathopen]0,1[$ and two bounded open connected subsets $\Omega^o$, $\Omega^i$   of $\mathbb{R}^n$ of class $C^{1,\alpha}$,} 
		\\
		&\mbox{such that their exteriors  $\mathbb{R}^n\setminus \overline{\Omega^o}$ and $\mathbb{R}^n\setminus \overline{\Omega^i}$ are connected,}
		\\
		&\mbox{and the origin  $0$ of $\mathbb{R}^n$ belongs both to $\Omega^o$ and to $\Omega^i$.}
	\end{split}
\end{equation}
Here the superscript  ``$o$'' stands for ``outer domain'' and the superscript ``$i$'' stands for ``inner domain.'' We take
\begin{equation}\label{eq:e0}
\epsilon_0 \equiv \mbox{sup}\{\theta \in \mathopen]0, +\infty\mathclose[: \epsilon \overline{\Omega^i} \subseteq \Omega^o, \ \forall \epsilon \in \mathopen]- \theta, \theta[  \}, 
\end{equation}
and we define the perforated domain $\Omega(\epsilon)$ by setting
\[
\Omega(\epsilon) \equiv \Omega^o \setminus \epsilon \overline{\Omega^i}
\]
for all $\epsilon\in\mathopen]-\epsilon_0,\epsilon_0[$.  Clearly, when $\epsilon$ tends to zero, the set $\Omega(\epsilon)$
degenerates to the punctured domain $\Omega^o \setminus \{0\}$ (see Figure \ref{fig:Omegaeps}).

\begin{figure}[htb]
\centering
\includegraphics[width=4.8in]{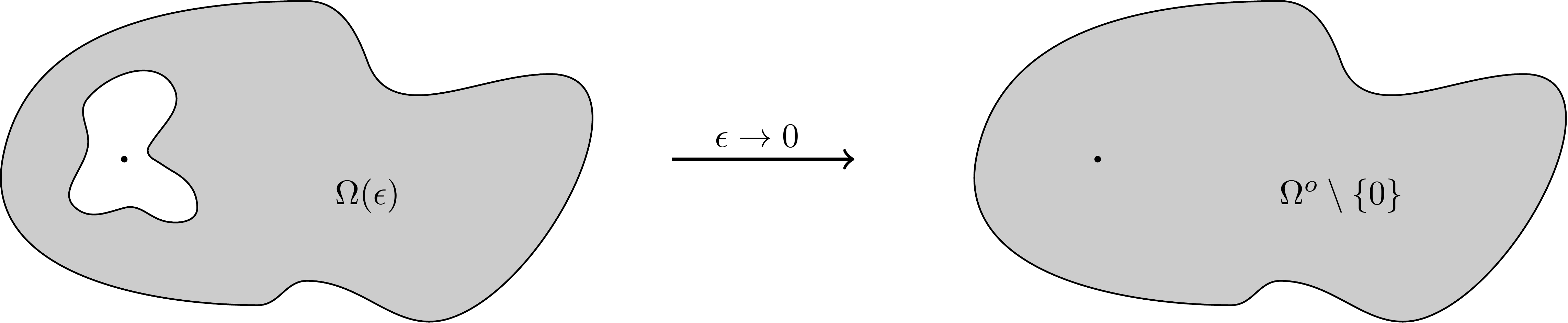}\\
\begin{center}
\caption{{\it The perforated set $\Omega(\epsilon)$ and the limiting punctured set  $\Omega^o\setminus\{0\}$.}} 
		\label{fig:Omegaeps}
\end{center}
\end{figure}

\subsection{The operator $\mathcal{V}_{\Omega(\epsilon)}$}

Our aim is to study the maps that take $\epsilon\in\mathopen]-\epsilon_0,\epsilon_0\mathclose[\setminus\mathopen\{0\mathclose\}$ to the operators $\mathcal{V}_{\Omega(\epsilon)}$, $\mathcal{K}_{\Omega(\epsilon)}$, $\mathcal{K}'_{\Omega(\epsilon)}$, and $\mathcal{W}_{\Omega(\epsilon)}$. We see, however, that these operators are defined on spaces that depend on the parameter $\epsilon$. For example, for every fixed $\epsilon\in\mathopen]-\epsilon_0,\epsilon_0\mathclose[\setminus\{0\}$ the operator $\mathcal{V}_{\Omega(\epsilon)}$ is an element of $\mathcal{L}(C^{0,\alpha}(\partial\Omega(\epsilon)),C^{1,\alpha}(\partial\Omega(\epsilon)))$ (we remind that $\mathcal{V}_{\Omega(\epsilon)}$ is the restriction of the single layer to the boundary of $\Omega(\epsilon)$, see definition \eqref{Vdef}). Then, to describe the dependence of $\mathcal{V}_{\Omega(\epsilon)}$ upon $\epsilon$ we ``pull-back'' the operator to the boundary of the fixed domains $\partial\Omega^o$ and $\partial\Omega^i$. That is,   we define
\begin{align*}
&\mathcal{V}^o_{\epsilon}[\theta^o,\theta^i](x)\equiv\mathcal{V}_{{\Omega(\epsilon)}}[\mu_\epsilon](x)&\forall x\in\partial\Omega^o\,,\\
&\mathcal{V}^i_{\epsilon}[\theta^o,\theta^i](t)\equiv\mathcal{V}_{{\Omega(\epsilon)}}[\mu_\epsilon](\epsilon t)&\forall t\in\partial\Omega^i\,,
\end{align*}
with
\begin{equation}\label{mueps}
\mu_\epsilon (x)\equiv
\left\{
\begin{array}{ll}
\theta^o(x) & \text{if $x\in \partial \Omega^o$}\, ,\\
\theta^i(x/\epsilon) & \text{if $x\in \partial (\epsilon \Omega^i)$}\,,
\end{array}
\right.
\end{equation}
for all $(\theta^o,\theta^i)\in C^{0,\alpha}(\partial \Omega^o)\times C^{0,\alpha}(\partial \Omega^i)$. So, in a sense, we identify functions of  $C^{0,\alpha}(\partial\Omega(\epsilon))$ and $C^{1,\alpha}(\partial\Omega(\epsilon))$ with  elements in the product spaces $C^{0,\alpha}(\partial \Omega^o)\times C^{0,\alpha}(\partial \Omega^i)$ and $C^{1,\alpha}(\partial \Omega^o)\times C^{1,\alpha}(\partial \Omega^i)$, respectively.
Then we set 
\[
{\mathcal{V}}_{\epsilon}\equiv(\mathcal{V}^o_{\epsilon},\mathcal{V}^i_{\epsilon})
\]
and we observe that, for every $\epsilon\in\mathopen]-\epsilon_0,\epsilon_0\mathclose[\setminus \{0\}$, the operator $\mathcal{V}_{\epsilon}$ is an element of a space that does not depend on $\epsilon$, namely 
\[
{\mathcal{V}}_{\epsilon}\in \mathcal{L}(C^{0,\alpha}(\partial \Omega^o)\times C^{0,\alpha}(\partial \Omega^i),C^{1,\alpha}(\partial \Omega^o)\times C^{1,\alpha}(\partial \Omega^i))\,.
\] 
In the following Theorem \ref{Veps} we describe $\mathcal{V}_{\epsilon}$ as a matrix operator with entries written in terms of analytic maps and elementary functions of $\epsilon$.

In what follows we will often use the equality
\begin{equation}\label{der.eq0}
\partial_\epsilon^k(F(\epsilon x))=\sum_{\substack{\beta \in \mathbb{N}^n\\|\beta|=k}}\frac{k!}{\beta!}x^\beta(D^{\beta}F)(\epsilon x),
\end{equation} 
which holds for all $k\in\mathbb{N}$, $\epsilon\in\mathbb{R}$, $x\in\mathbb{R}^n$, and for all functions $F$ analytic in a neighborhood of $\epsilon x$. Here, if $\beta\in\mathbb{N}^n$, then $(D^\beta F)(y)$ denotes the partial derivative of multi-index $\beta$ of the function $F$ evaluated at $y\in\mathbb{R}^n$.

\begin{theorem}\label{Veps} Let $\alpha$, $\Omega^o$, $\Omega^i$ be as in \eqref{eq:ass:sing}. Let $\epsilon_0$ be as in \eqref{eq:e0}. There exist real analytic maps
\[
\begin{aligned}
\mathopen]-\epsilon_0,\epsilon_0[&\to &&\mathcal{L}(C^{0,\alpha}(\partial \Omega^i),C^{1,\alpha}(\partial \Omega^o))\\
\epsilon&\mapsto&&\mathcal{V}^{o,i}_\epsilon
\end{aligned}
\] 
and
\[
\begin{aligned}
\mathopen]-\epsilon_0,\epsilon_0[&\to &&\mathcal{L}(C^{0,\alpha}(\partial \Omega^o),C^{1,\alpha}(\partial \Omega^i))\\
\epsilon&\mapsto&&\mathcal{V}^{i,o}_\epsilon
\end{aligned}
\] 
such that 
\begin{equation}\label{Veps.eq1}
\mathcal{V}_\epsilon=
\left(\begin{array}{cc}
\mathcal{V}_{\Omega^o}&|\epsilon|^{n-1}\mathcal{V}^{o,i}_\epsilon\\
\mathcal{V}^{i,o}_\epsilon&|\epsilon|\,\mathcal{V}_{\Omega^i}-\delta_{2,n}\frac{|\epsilon|\log|\epsilon|}{2\pi}\mathrm{Int}_{\partial\Omega^i}
\end{array}
\right)
\end{equation}
for all $\epsilon\in \mathopen]-\epsilon_0,\epsilon_0[\setminus\{0\}$, where 
\[
\mathrm{Int}_{\partial\Omega^i}[\theta^i]\equiv\int_{\partial\Omega^i}\theta^i\,d\sigma\quad\forall \theta^i\in C^{0,\alpha}(\partial\Omega^i)\,.
\]
Moreover,  the following statements hold.
\begin{itemize}
\item[(i)] The coefficients $\mathcal{V}_{(k)}^{o,i}$ of the power series expansion $\mathcal{V}_\epsilon^{o,i}=\sum_{k=0}^\infty \epsilon^k \mathcal{V}_{(k)}^{o,i}$
with $\epsilon$ in a neighborhood of $0$ are given by
\[
\mathcal{V}_{(k)}^{o,i}[\theta^i](x)= (-1)^{k} \sum_{\substack{\beta \in \mathbb{N}^n\\|\beta|=k}} \frac{1}{\beta!}(D^\beta G_n)(x) \int_{\partial \Omega^i}s^\beta \theta^i(s)\, d\sigma_s 
\]
for all $k\in\mathbb{N}$, $x \in \partial \Omega^o$, and $\theta^i \in C^{0,\alpha}(\partial \Omega^i)$.
\item[(ii)] The coefficients $\mathcal{V}_{(k)}^{i,o}$ of the power series expansion
$\mathcal{V}_\epsilon^{i,o}=\sum_{k=0}^\infty \epsilon^k \mathcal{V}_{(k)}^{i,o}$  with $\epsilon$ in a neighborhood of $0$ are given by
\[
\mathcal{V}_{(k)}^{i,o}[\theta^o](t)=(-1)^{k}\sum_{\substack{\beta \in \mathbb{N}^n\\|\beta|=k}} \frac{1}{\beta!}t^\beta\int_{\partial \Omega^o} (D^\beta G_n)(y)\theta^o(y)\, d\sigma_y 
\]
for all $k\in\mathbb{N}$, $t \in \partial \Omega^i$, and  $\theta^o \in C^{0,\alpha}(\partial \Omega^o)$.
\end{itemize}
\end{theorem}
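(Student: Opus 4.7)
The plan is to unpack $\mathcal{V}_{\epsilon}=(\mathcal{V}^o_\epsilon,\mathcal{V}^i_\epsilon)$ as a $2\times 2$ block matrix of operators acting on the pair $(\theta^o,\theta^i)$. Using the splitting $\partial\Omega(\epsilon)=\partial\Omega^o\cup\epsilon\,\partial\Omega^i$ and the definition of $\mu_\epsilon$ in \eqref{mueps}, each of $\mathcal{V}^o_\epsilon[\theta^o,\theta^i]$ and $\mathcal{V}^i_\epsilon[\theta^o,\theta^i]$ splits into a contribution from $\partial\Omega^o$ and one from $\epsilon\,\partial\Omega^i$. The top-left block is immediately $\mathcal{V}_{\Omega^o}[\theta^o]$, independent of $\epsilon$. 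For the other three blocks I would perform the change of variable $y=\epsilon s$ on $\partial(\epsilon\Omega^i)$, producing the Jacobian $|\epsilon|^{n-1}$ from the scaling of the $(n-1)$-dimensional surface measure, and isolating explicitly the $\epsilon$-dependence.

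For the off-diagonal blocks I would then define
\[
\mathcal{V}^{o,i}_\epsilon[\theta^i](x)\equiv\int_{\partial\Omega^i}G_n(x-\epsilon s)\,\theta^i(s)\,d\sigma_s,\qquad \mathcal{V}^{i,o}_\epsilon[\theta^o](t)\equiv\int_{\partial\Omega^o}G_n(\epsilon t-y)\,\theta^o(y)\,d\sigma_y,
\]
and verify their real analyticity in $\epsilon$. Since $\epsilon\,\overline{\Omega^i}\subseteq\Omega^o$ for $\epsilon\in\,]-\epsilon_0,\epsilon_0[$, the arguments $x-\epsilon s$ (with $x$ in a neighborhood of $\partial\Omega^o$ and $s\in\partial\Omega^i$) and $\epsilon t-y$ (with $t\in\partial\Omega^i$ and $y$ in a neighborhood of $\partial\Omega^o$) stay uniformly away from $0$. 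Hence $G_n$ is jointly real analytic in the relevant variables, with uniform control on all derivatives on compact subsets, and this uniform control upgrades the pointwise analyticity of the kernel to norm-level analyticity of the corresponding integral operators into $\mathcal{L}(C^{0,\alpha}(\partial\Omega^i),C^{1,\alpha}(\partial\Omega^o))$ and $\mathcal{L}(C^{0,\alpha}(\partial\Omega^o),C^{1,\alpha}(\partial\Omega^i))$, respectively.

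For the bottom-right block I would be left, after the change of variable, with $|\epsilon|^{n-1}\int_{\partial\Omega^i}G_n(\epsilon(t-s))\,\theta^i(s)\,d\sigma_s$, and would exploit the homogeneity of $G_n$: the identity $G_n(\epsilon z)=|\epsilon|^{2-n}G_n(z)$ for $n\geq 3$, and $G_2(\epsilon z)=G_2(z)-(2\pi)^{-1}\log|\epsilon|$ for $n=2$. Combining with the Jacobian $|\epsilon|^{n-1}$ produces exactly $|\epsilon|\mathcal{V}_{\Omega^i}[\theta^i]$ plus, only in dimension $n=2$, the correction $-(2\pi)^{-1}|\epsilon|\log|\epsilon|\,\mathrm{Int}_{\partial\Omega^i}[\theta^i]$, matching the form \eqref{Veps.eq1}.

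Finally, to derive (i) and (ii), I would Taylor-expand $G_n(x-\epsilon s)$ and $G_n(\epsilon t-y)=G_n(y-\epsilon t)$ (using the parity $G_n(-z)=G_n(z)$) around $\epsilon=0$ via the formula \eqref{der.eq0} applied with $F(\cdot)=G_n(x-\cdot)$ and $F(\cdot)=G_n(y-\cdot)$, respectively; then I would interchange sum and integral by uniform convergence and read off $\mathcal{V}^{o,i}_{(k)}$ and $\mathcal{V}^{i,o}_{(k)}$. The prefactor $(-1)^k$ in both formulas arises because each $D^{\beta}$ acting on $y\mapsto G_n(x-y)$ (or $z\mapsto G_n(y-z)$) contributes a sign $(-1)^{|\beta|}$. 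The main obstacle I anticipate is precisely the step promoting the $\epsilon$-analyticity of the kernels to norm-level analyticity of the operators into the $C^{1,\alpha}$-valued target: one needs uniform convergence not only of the Taylor partial sums of the kernel but also of their first derivatives in the target spatial variable together with the corresponding $\alpha$-Hölder seminorms, which reduces to joint smoothness of $G_n$ on the closed set of non-contact configurations and to standard estimates for parameter-dependent integrals with smooth kernels over compact manifolds.
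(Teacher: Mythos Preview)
Your proposal is correct and follows essentially the same route as the paper: the same block decomposition via change of variables, the same definitions of $\mathcal{V}^{o,i}_\epsilon$ and $\mathcal{V}^{i,o}_\epsilon$, the same use of the homogeneity of $G_n$ for the bottom-right block, and the same Taylor computation via \eqref{der.eq0} (the paper applies parity of $G_n$ at the end rather than at the beginning for (ii), but that is cosmetic). The one point where the paper differs is precisely the step you flag as the main obstacle: instead of arguing directly that uniform control of the kernel and its derivatives yields norm-level analyticity into $C^{1,\alpha}$, the paper simply invokes the black-box result of \cite{LaMu13} on integral operators with real analytic kernels (together with the linearity argument of Theorem~\ref{thm:anlpop}) to conclude real analyticity of $\epsilon\mapsto\mathcal{V}^{o,i}_\epsilon$ and $\epsilon\mapsto\mathcal{V}^{i,o}_\epsilon$.
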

\begin{proof} Let $(\theta^o,\theta^i) \in C^{0,\alpha}(\partial \Omega^o)\times C^{0,\alpha}(\partial \Omega^i)$, $\epsilon\in \mathopen]-\epsilon_0,\epsilon_0[\setminus\{0\}$. By a computation based on the theorem of change of variable in integrals we have

\[
\begin{split}
\mathcal{V}^o_{\epsilon}[\theta^o,\theta^i](x)&=\int_{\partial \Omega^o}G_n(x-y)\theta^o(y)\, d\sigma_y+|\epsilon|^{n-1}\int_{\partial \Omega^i}G_n(x-\epsilon s)\theta^i(s)\, d\sigma_s\\
&
=\mathcal{V}_{\Omega^o}[\theta^o](x)+|\epsilon|^{n-1}\int_{\partial \Omega^i}G_n(x-\epsilon s)\theta^i(s)\, d\sigma_s 
\end{split}
\]
for all $x\in\partial\Omega^o$. Similarly, we can compute that
\[
\begin{split}
&\mathcal{V}^i_{\epsilon}[\theta^o,\theta^i](t)\\
&\quad=\int_{\partial \Omega^o}G_n(\epsilon t -y)\theta^o(y)\, d\sigma_y+|\epsilon| \int_{\partial \Omega^i}G_n(t-s)\theta^i(s)\, d\sigma_s-\delta_{2,n}\frac{|\epsilon|\log|\epsilon|}{2\pi}\int_{\partial \Omega^i}\theta^i(s)\, d\sigma_s\\
&\quad
=\int_{\partial \Omega^o}G_n(\epsilon t -y)\theta^o(y)\, d\sigma_y+|\epsilon|\;\mathcal{V}_{\Omega^i}[\theta^i](t)-\delta_{2,n}\frac{|\epsilon|\log|\epsilon|}{2\pi}\int_{\partial \Omega^i}\theta^i(s)\, d\sigma_s
\end{split}
\]for all $t\in\partial\Omega^i$, where we have also used the equality
\[
G_n(\epsilon \xi)=|\epsilon|^{2-n}G_n(\xi)-\delta_{2,n}\frac{1}{2\pi}\log|\epsilon|\qquad  \forall \xi \in \mathbb{R}^n \setminus \{0\}\, ,\forall \epsilon \neq 0\, .
\] 
Then equality \eqref{Veps.eq1} holds with
\[
\mathcal{V}^{o,i}_{\epsilon}[\theta^i](x)\equiv\int_{\partial \Omega^i}G_n(x-\epsilon s)\theta^i(s)\, d\sigma_s\qquad\forall \theta^i\in C^{0,\alpha}(\partial\Omega^i)\,,\;\forall x\in\partial\Omega^o
\]
and 
\[
\mathcal{V}^{i,o}_{\epsilon}[\theta^o](t)\equiv\int_{\partial \Omega^o}G_n(\epsilon t -y)\theta^o(y)\, d\sigma_y\qquad\forall \theta^o\in C^{0,\alpha}(\partial\Omega^o)\,,\;\forall t\in\partial\Omega^i\,.
\]
By the regularity results for the integral operators with real analytic kernel of \cite{LaMu13} and by the same argument we have used in the proof of Theorem \ref{thm:anlpop}, we can see that the maps $\epsilon\mapsto \mathcal{V}^{o,i}_\epsilon$ and $\epsilon\mapsto \mathcal{V}^{i,o}_\epsilon$ are real analytic from $\mathopen]-\epsilon_0,\epsilon_0[$ to $\mathcal{L}(C^{0,\alpha}(\partial \Omega^i),C^{1,\alpha}(\partial \Omega^o))$ and from $\mathopen]-\epsilon_0,\epsilon_0[$ to $\mathcal{L}(C^{0,\alpha}(\partial \Omega^o),C^{1,\alpha}(\partial \Omega^i))$, respectively. 

Then we can locally express $\epsilon\mapsto\mathcal{V}^{o,i}_\epsilon$ with its Taylor series. In particular,  we have
\[
\mathcal{V}^{o,i}_\epsilon=\sum_{k=0}^\infty \epsilon^k\frac{1}{k!}(\partial_\epsilon^k\mathcal{V}^{o,i}_\epsilon)_{|\epsilon=0}
\]
for $\epsilon$ in a neighborhood of $0$ and we can prove statement (i) computing the derivatives $(\partial_\epsilon^k\mathcal{V}^{o,i}_\epsilon)_{|\epsilon=0}$. With the help of equation \eqref{der.eq0} we can see that 
\[
\partial_\epsilon^k\bigg(\int_{\partial \Omega^i}G_n(x-\epsilon s)\theta^i(s)\, d\sigma_s\bigg)=(-1)^k \sum_{\substack{\beta \in \mathbb{N}^n\\|\beta|=k}} \frac{k!}{\beta!}\int_{\partial \Omega^i}s^\beta (D^\beta G_n)(x-\epsilon s)\theta^i(s)\, d\sigma_s\,.
\]
Accordingly
\[
\partial_\epsilon^k\bigg(\int_{\partial \Omega^i}G_n(x-\epsilon s)\theta^i(s)\, d\sigma_s\bigg)_{|\epsilon=0}=(-1)^{k} \sum_{\substack{\beta \in \mathbb{N}^n\\|\beta|=k}} \frac{k!}{\beta!}(D^\beta G_n)(x) \int_{\partial \Omega^i}s^\beta \theta^i(s)\, d\sigma_s\,,
\] 
and statement (i) follows. 

Similarly, to verify statement (ii) we have to compute the derivatives $(\partial_\epsilon^k\mathcal{V}^{i,o}_\epsilon)_{|\epsilon=0}$. Again, with the help of \eqref{der.eq0} we see that 
\[
\partial_\epsilon^k\bigg(\int_{\partial \Omega^o}G_n(\epsilon t-y)\theta^o(y)\, d\sigma_y\bigg)= \sum_{\substack{\beta \in \mathbb{N}^n\\|\beta|=k}} \frac{k!}{\beta!}\int_{\partial \Omega^o}t^\beta (D^\beta G_n)(\epsilon t-y)\theta^o(y)\, d\sigma_y\, ,
\]
accordingly
\[
\begin{split}
\partial_\epsilon^k\bigg(\int_{\partial \Omega^o}G_n(\epsilon t-y)\theta^o(y)\, d\sigma_y\bigg)_{|\epsilon=0}&=\sum_{\substack{\beta \in \mathbb{N}^n\\|\beta|=k}} \frac{k!}{\beta!}t^\beta\int_{\partial \Omega^o} (D^\beta G_n)(-y)\theta^o(y)\, d\sigma_y\\ 
&=(-1)^{k}\sum_{\substack{\beta \in \mathbb{N}^n\\|\beta|=k}} \frac{k!}{\beta!}t^\beta\int_{\partial \Omega^o} (D^\beta G_n)(y)\theta^o(y)\, d\sigma_y \, ,
\end{split}
\]
and statement (ii) follows.
\end{proof}

\subsection{The operator $\mathcal{K}_{\Omega(\epsilon)}$}

We proceed with the boundary operator $\mathcal{K}_{\Omega(\epsilon)}$, which is the restriction of the double layer potential to the boundary of $\Omega(\epsilon)$  (see definition 
\eqref{Kdef}).
 In a way that resemble what we did above for the single layer potential, we set
\begin{align*}
&\mathcal{K}^o_{\epsilon}[\theta^o,\theta^i](x)\equiv\mathcal{K}_{{\Omega(\epsilon)}}[\psi_\epsilon](x)&\forall x\in\partial\Omega^o\,,\\
&\mathcal{K}^i_{\epsilon}[\theta^o,\theta^i](t)\equiv\mathcal{K}_{{\Omega(\epsilon)}}[\psi_\epsilon](\epsilon t)&\forall t\in\partial\Omega^i\,,
\end{align*}
for all $\epsilon \in \mathopen]-\epsilon_0,\epsilon_0\mathclose[\setminus \{0\}$ and  $(\theta^o,\theta^i)\in C^{1,\alpha}(\partial \Omega^o)\times C^{1,\alpha}(\partial \Omega^i)$, where
\begin{equation}\label{psieps}
\psi_\epsilon (x)\equiv
\left\{
\begin{array}{ll}
\theta^o(x) & \text{if $x\in \partial \Omega^o$}\, ,\\
\theta^i(x/\epsilon) & \text{if $x\in \partial (\epsilon \Omega^i)$}\, .
\end{array}
\right.
\end{equation}
Then we denote by $\mathcal{K}_\epsilon$ the element of $\mathcal{L}(C^{1,\alpha}(\partial \Omega^o)\times C^{1,\alpha}(\partial \Omega^i),C^{1,\alpha}(\partial \Omega^o)\times C^{1,\alpha}(\partial \Omega^i))$ defined by 
\[
\mathcal{K}_\epsilon\equiv(\mathcal{K}^o_{\epsilon},\mathcal{K}^i_{\epsilon})\quad\forall \epsilon \in \mathopen]-\epsilon_0,\epsilon_0\mathclose[\setminus \{0\}\,.
\]
We have the following.

\begin{theorem}\label{Keps} Let $\alpha$, $\Omega^o$, $\Omega^i$ be as in \eqref{eq:ass:sing}. Let $\epsilon_0$ be as in \eqref{eq:e0}. There exist real analytic maps
\[
\begin{aligned}
 \mathopen]-\epsilon_0,\epsilon_0[&\to &&\mathcal{L}(C^{1,\alpha}(\partial \Omega^i),C^{1,\alpha}(\partial \Omega^o))\\
\epsilon&\mapsto&&\mathcal{K}^{o,i}_\epsilon
\end{aligned}
\] 
and
\[
\begin{aligned}
 \mathopen]-\epsilon_0,\epsilon_0[&\to &&\mathcal{L}(C^{1,\alpha}(\partial \Omega^o),C^{1,\alpha}(\partial \Omega^i))\\
\epsilon&\mapsto&&\mathcal{K}^{i,o}_\epsilon
\end{aligned}
\] 
such that 
\begin{equation}\label{Keps.eq1}
\mathcal{K}_\epsilon=
\left(\begin{array}{cc}
\mathcal{K}_{\Omega^o}&\epsilon|\epsilon|^{n-2}\mathcal{K}^{o,i}_\epsilon\\
\mathcal{K}^{i,o}_\epsilon&-\mathcal{K}_{\Omega^i}
\end{array}
\right)
\end{equation}
for all $\epsilon\in \mathopen]-\epsilon_0,\epsilon_0[\setminus\{0\}$. Moreover,  the following statements hold.
\begin{itemize}
\item[(i)] The coefficients $\mathcal{K}_{(k)}^{o,i}$ of the power series expansion $\mathcal{K}_\epsilon^{o,i}=\sum_{k=0}^\infty \epsilon^k \mathcal{K}_{(k)}^{o,i}$
with $\epsilon$ in a neighborhood of $0$ are given by
\[
\mathcal{K}_{(k)}^{o,i}[\theta^i](x)\equiv  (-1)^{k} \sum_{\substack{\beta \in \mathbb{N}^n\\|\beta|=k}} \frac{1}{\beta!}(\nabla D^\beta G_n)(x) \cdot \int_{\partial \Omega^i}\nu_{\Omega^i}(s)  s^\beta \theta^i(s)\, d\sigma_s 
\]
for all $k\in\mathbb{N}$, $x \in \partial \Omega^o$, and $\theta^i \in C^{1,\alpha}(\partial \Omega^i)$.
\item[(ii)] The coefficients $\mathcal{K}_{(k)}^{i,o}$ of the power series expansion
$\mathcal{K}_\epsilon^{i,o}=\sum_{k=0}^\infty \epsilon^k \mathcal{K}_{(k)}^{i,o}$  with $\epsilon$ in a neighborhood of $0$ are given by
\[
\mathcal{K}_{(k)}^{i,o}[\theta^o](t)\equiv (-1)^{k}\sum_{\substack{\beta \in \mathbb{N}^n\\|\beta|=k}} \frac{1}{\beta!} t^\beta \int_{\partial \Omega^o}\nu_{\Omega^o}(y) \cdot  (\nabla D^\beta G_n)(y)\theta^o(y)\, d\sigma_y 
\]
for all $k\in\mathbb{N}$, $t \in \partial \Omega^i$, and  $\theta^o \in C^{1,\alpha}(\partial \Omega^o)$.
\end{itemize}
\end{theorem}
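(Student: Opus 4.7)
My plan is to mimic the proof of Theorem \ref{Veps}, taking advantage of the parallel structure. First I would split the boundary integrals defining $\mathcal{K}^{o}_\epsilon[\theta^o,\theta^i]$ and $\mathcal{K}^{i}_\epsilon[\theta^o,\theta^i]$ into contributions from $\partial\Omega^o$ and from $\partial(\epsilon\Omega^i)$, and then change variables on the latter via $y=\epsilon s$ with $s\in\partial\Omega^i$. Three ingredients then govern the bookkeeping: the Jacobian factor $|\epsilon|^{n-1}$ from $d\sigma_y=|\epsilon|^{n-1}d\sigma_s$; the identity $\nu_{\Omega(\epsilon)}(\epsilon s)=-\mathrm{sign}(\epsilon)\,\nu_{\Omega^i}(s)$, which reflects both that the outer normal of $\Omega(\epsilon)$ on $\partial(\epsilon\Omega^i)$ points into the hole and that the orientation of $\partial(\epsilon\Omega^i)$ flips with the sign of $\epsilon$; and the homogeneity relation $\nabla G_n(\epsilon\xi)=\mathrm{sign}(\epsilon)|\epsilon|^{1-n}\nabla G_n(\xi)$ for $\xi\neq 0$.

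With these in hand, the $(o,o)$ block is immediately $\mathcal{K}_{\Omega^o}$. The $(o,i)$ contribution, after change of variables, collects a prefactor $\mathrm{sign}(\epsilon)|\epsilon|^{n-1}=\epsilon|\epsilon|^{n-2}$, leading to
\[
\mathcal{K}_\epsilon^{o,i}[\theta^i](x)=\int_{\partial\Omega^i}\nu_{\Omega^i}(s)\cdot\nabla G_n(x-\epsilon s)\theta^i(s)\,d\sigma_s,
\]
which is well-defined for all $\epsilon\in\mathopen]-\epsilon_0,\epsilon_0\mathclose[$ since $0\in\Omega^o$ ensures $x-\epsilon s$ stays away from the origin. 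In the $(i,i)$ block the factors $\mathrm{sign}(\epsilon)$ and $|\epsilon|^{n-1}$ coming from the Jacobian and the normal, together with the homogeneity $\mathrm{sign}(\epsilon)|\epsilon|^{1-n}$ of $\nabla G_n$, cancel to give exactly $-\mathcal{K}_{\Omega^i}[\theta^i](t)$ (the minus sign from the flipped outer normal). The $(i,o)$ block requires no change of variables and yields
\[
\mathcal{K}_\epsilon^{i,o}[\theta^o](t)=-\int_{\partial\Omega^o}\nu_{\Omega^o}(y)\cdot\nabla G_n(\epsilon t-y)\theta^o(y)\,d\sigma_y,
\]
which again extends analytically to a neighborhood of $\epsilon=0$ since $\epsilon t-y$ stays in a compact set avoiding the origin for $t\in\partial\Omega^i$ and $y\in\partial\Omega^o$.

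Real analyticity of the maps $\epsilon\mapsto\mathcal{K}^{o,i}_\epsilon$ and $\epsilon\mapsto\mathcal{K}^{i,o}_\epsilon$ then follows by invoking the results on integral operators with real analytic kernels from \cite{LaMu13}, exactly as in Theorem \ref{Veps}; one applies them to the analytic kernel $\nabla G_n$ restricted to the compact sets described above and then uses the trick in the proof of Theorem \ref{thm:anlpop} to pass from joint analyticity in $(\epsilon,\theta^i)$ (resp.\ $(\epsilon,\theta^o)$) to analyticity of the associated operator-valued map. Finally, the Taylor coefficients are obtained by differentiating under the integral sign and applying \eqref{der.eq0} to $F=\nabla G_n$ (with the substitution $x-\epsilon s$, which produces the extra $(-1)^k$), together with the parity identity $(D^\beta\nabla G_n)(-y)=(-1)^{|\beta|+1}(D^\beta\nabla G_n)(y)$ used in the $(i,o)$ block to absorb the sign coming from the minus in front of that integral.

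The step I expect to require the most care is not the analyticity or the Taylor expansion, both of which are routine once the setup is in place, but rather the sign bookkeeping in the change of variables on $\partial(\epsilon\Omega^i)$: one must simultaneously track the sign of $\epsilon$ in the orientation of the outer normal, the absolute value $|\epsilon|^{n-1}$ in the surface measure, and the homogeneity of $\nabla G_n$, and verify that these combine into the single factor $\epsilon|\epsilon|^{n-2}$ appearing in the $(o,i)$ entry of \eqref{Keps.eq1} while cancelling cleanly in the $(i,i)$ entry and leaving no spurious factor in the $(i,o)$ entry.
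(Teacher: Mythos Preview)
Your proposal is correct and follows essentially the same approach as the paper's own proof: split the integral over $\partial\Omega(\epsilon)$, change variables on $\partial(\epsilon\Omega^i)$, track the factors $|\epsilon|^{n-1}$, $-\mathrm{sgn}(\epsilon)$, and the homogeneity of $\nabla G_n$ to obtain \eqref{Keps.eq1} with the same explicit $\mathcal{K}^{o,i}_\epsilon$ and $\mathcal{K}^{i,o}_\epsilon$, then invoke \cite{LaMu13} and the argument of Theorem \ref{thm:anlpop} for analyticity and \eqref{der.eq0} for the coefficients. If anything, you are more explicit than the paper about the normal--orientation identity and the parity of $\nabla D^\beta G_n$ used in the $(i,o)$ block.
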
 
\begin{proof} Let $(\theta^o,\theta^i) \in C^{1,\alpha}(\partial \Omega^o)\times C^{1,\alpha}(\partial \Omega^i)$, $\epsilon\in \mathopen]-\epsilon_0,\epsilon_0[\setminus\{0\}$.  By the theorem of change of variables in integrals we can see that
\[
\begin{split}
 \mathcal{K}^o_{\epsilon}[\theta^o,\theta^i](x)&=\mathcal{K}_{\Omega^o}[\theta^o](x)+|\epsilon|^{n-1}\mathrm{sgn}(\epsilon) \int_{\partial \Omega^i}\nu_{\Omega^i}(s)\cdot \nabla G_n(x-\epsilon s)\theta^i(s)\, d\sigma_s\\
&
=\mathcal{K}_{\Omega^o}[\theta^o](x)+\epsilon |\epsilon|^{n-2}\int_{\partial \Omega^i}\nu_{\Omega^i}(s)\cdot \nabla G_n(x-\epsilon s)\theta^i(s)\, d\sigma_s
\end{split}
\]
for all $x\in\partial\Omega^o$. Moreover, by equality
\[
\nabla G_n(\epsilon \eta) =\mathrm{sgn}(\epsilon)|\epsilon|^{1-n}\nabla G_n( \eta ) \qquad \forall \epsilon \in  \mathbb{R}\setminus \{0\}, \, \forall \eta \in \mathbb{R}^n \setminus \{0\},
\]
we can compute that  
\[
 \mathcal{K}^i_{\epsilon}[\theta^o,\theta^i](t)=-\int_{\partial \Omega^o}\nu_{\Omega^o}(y)\cdot \nabla G_n(\epsilon t -y)\theta^o(y)\, d\sigma_y- \mathcal{K}_{\Omega^i}[\theta^i](t)  \qquad \forall t \in \partial\Omega^i\, .
\]
Then equality \eqref{Keps.eq1} holds with
\[
\mathcal{K}^{o,i}_{\epsilon}[\theta^i](x)\equiv\int_{\partial \Omega^i}\nu_{\Omega^i}(s)\cdot \nabla G_n(x-\epsilon s)\theta^i(s)\, d\sigma_s\qquad\forall \theta^i\in C^{1,\alpha}(\partial\Omega^i)\,,\;\forall x\in\partial\Omega^o
\]
and 
\[
\mathcal{K}^{i,o}_{\epsilon}[\theta^o](t)\equiv-\int_{\partial \Omega^o}\nu_{\Omega^o}(y)\cdot \nabla G_n(\epsilon t -y)\theta^o(y)\, d\sigma_y\qquad\forall \theta^o\in C^{1,\alpha}(\partial\Omega^o)\,,\;\forall t\in\partial\Omega^i\,.
\]
By the regularity results for the integral operators with real analytic kernel of \cite{LaMu13} and by the same argument we have used in the proof of Theorem \ref{thm:anlpop}, we can see that the maps $\epsilon\mapsto \mathcal{K}^{o,i}_\epsilon$ and $\epsilon\mapsto \mathcal{K}^{i,o}_\epsilon$ are real analytic from $\mathopen]-\epsilon_0,\epsilon_0[$ to $\mathcal{L}(C^{1,\alpha}(\partial \Omega^i),C^{1,\alpha}(\partial \Omega^o))$ and from $\mathopen]-\epsilon_0,\epsilon_0[$ to $\mathcal{L}(C^{1,\alpha}(\partial \Omega^o),C^{1,\alpha}(\partial \Omega^i))$, respectively. 

Then we can locally express $\epsilon\mapsto\mathcal{K}^{o,i}_\epsilon$ with its Taylor series. In particular,  we have
\[
\mathcal{K}^{o,i}_\epsilon=\sum_{k=0}^\infty \epsilon^k\frac{1}{k!}(\partial_\epsilon^k\mathcal{K}^{o,i}_\epsilon)_{|\epsilon=0}
\]
for $\epsilon$ in a neighborhood of $0$ and we can prove statement (i) computing the derivatives $(\partial_\epsilon^k\mathcal{K}^{o,i}_\epsilon)_{|\epsilon=0}$. With the help of equation \eqref{der.eq0} we can see that 
\[
\begin{split}
&\partial_\epsilon^k\bigg( \int_{\partial \Omega^i}\nu_{\Omega^i}(s)\cdot \nabla G_n(x-\epsilon s)\theta^i(s)\, d\sigma_s\bigg)\\
&\qquad=(-1)^k \sum_{\substack{\beta \in \mathbb{N}^n\\|\beta|=k}} \frac{k!}{\beta!}\int_{\partial \Omega^i}\nu_{\Omega^i}(s) \cdot  (\nabla D^\beta G_n)(x-\epsilon s) s^\beta \theta^i(s)\, d\sigma_s\, ,
\end{split}
\]
and accordingly
\[
\begin{split}
&\partial_\epsilon^k\bigg( \int_{\partial \Omega^i}\nu_{\Omega^i}(s)\cdot \nabla G_n(x-\epsilon s)\theta^i(s)\, d\sigma_s\bigg)_{|\epsilon=0}\\
&\qquad=(-1)^{k} \sum_{\substack{\beta \in \mathbb{N}^n\\|\beta|=k}} \frac{k!}{\beta!}(\nabla D^\beta G_n)(x) \cdot \int_{\partial \Omega^i}\nu_{\Omega^i}(s)  s^\beta \theta^i(s)\, d\sigma_s \, .
\end{split}
\]
and statement (i) follows. 

Similarly, to verify statement (ii) we have to compute the derivatives $(\partial_\epsilon^k\mathcal{K}^{i,o}_\epsilon)_{|\epsilon=0}$. Again, with the help of \eqref{der.eq0} we see that 
\[
\begin{split}
&\partial_\epsilon^k\bigg( \int_{\partial \Omega^o}\nu_{\Omega^o}(y)\cdot \nabla G_n(\epsilon t -y)\theta^o(y)\, d\sigma_y\bigg)\\
&\qquad=\sum_{\substack{\beta \in \mathbb{N}^n\\|\beta|=k}} \frac{k!}{\beta!}\int_{\partial \Omega^o}\nu_{\Omega^o}(y) \cdot  (\nabla D^\beta G_n)(\epsilon t-y) t^\beta \theta^o(y)\, d\sigma_y\, ,
\end{split}
\]
and accordingly
\[
\begin{split}
&\partial_\epsilon^k\bigg(-\int_{\partial \Omega^o}\nu_{\Omega^o}(y)\cdot \nabla G_n(\epsilon t -y)\theta^o(y)\, d\sigma_y\bigg)_{|\epsilon=0}\\
&\qquad=-\sum_{\substack{\beta \in \mathbb{N}^n\\|\beta|=k}} \frac{k!}{\beta!}\int_{\partial \Omega^o}\nu_{\Omega^o}(y) \cdot  (\nabla D^\beta G_n)(-y) t^\beta \theta^o(y)\, d\sigma_y\\
&\qquad=(-1)^{k}\sum_{\substack{\beta \in \mathbb{N}^n\\|\beta|=k}} \frac{k!}{\beta!}t^\beta \int_{\partial \Omega^i}\nu_{\Omega^o}(y) \cdot  (\nabla D^\beta G_n)(y)\theta^o(y)\, d\sigma_y \, .
\end{split}
\]
and statement (ii) follows.
\end{proof}


\subsection{The operator $\mathcal{K}'_{\Omega(\epsilon)}$}

We now turn to $\mathcal{K}'_{\Omega(\epsilon)}$,  the boundary operator related with the normal derivative of the single layer potential (see definition \eqref{K'def}). We set
\begin{align*}
&\mathcal{K}'^o_{\epsilon}[\theta^o,\theta^i](x)\equiv\mathcal{K}'_{{\Omega(\epsilon)}}[\mu_\epsilon](x)&\forall x\in\partial\Omega^o\,,\\
&\mathcal{K}'^i_{\epsilon}[\theta^o,\theta^i](t)\equiv\mathcal{K}'_{{\Omega(\epsilon)}}[\mu_\epsilon](\epsilon t)&\forall t\in\partial\Omega^i\,,
\end{align*}
for all $\epsilon \in \mathopen]-\epsilon_0,\epsilon_0\mathclose[\setminus \{0\}$ and  $(\theta^o,\theta^i)\in C^{0,\alpha}(\partial \Omega^o)\times C^{0,\alpha}(\partial \Omega^i)$, with $\mu_\epsilon$ as in \eqref{mueps}. Then we define
\[
\mathcal{K}'_\epsilon\equiv(\mathcal{K}'^o_{\epsilon},\mathcal{K}'^i_{\epsilon})\quad\forall \epsilon \in \mathopen]-\epsilon_0,\epsilon_0\mathclose[\setminus \{0\}
\]
and we note that $\mathcal{K}'_\epsilon$ is an element of $\mathcal{L}(C^{0,\alpha}(\partial \Omega^o)\times C^{0,\alpha}(\partial \Omega^i),C^{0,\alpha}(\partial \Omega^o)\times C^{0,\alpha}(\partial \Omega^i))$.
We can prove the following.

\begin{theorem}\label{K'eps} Let $\alpha$, $\Omega^o$, $\Omega^i$ be as in \eqref{eq:ass:sing}. Let $\epsilon_0$ be as in \eqref{eq:e0}. There exist real analytic maps
\[
\begin{aligned}
\mathopen]-\epsilon_0,\epsilon_0[&\to &&\mathcal{L}(C^{0,\alpha}(\partial \Omega^i),C^{0,\alpha}(\partial \Omega^o))\\
\epsilon&\mapsto&&\mathcal{K}'^{o,i}_\epsilon
\end{aligned}
\] 
and
\[
\begin{aligned}
\mathopen]-\epsilon_0,\epsilon_0[&\to &&\mathcal{L}(C^{0,\alpha}(\partial \Omega^o),C^{0,\alpha}(\partial \Omega^i))\\
\epsilon&\mapsto&&\mathcal{K}'^{i,o}_\epsilon
\end{aligned}
\] 
such that 
\begin{equation}\label{K'eps.eq1}
\mathcal{K}'_\epsilon=
\left(\begin{array}{cc}
\mathcal{K}'_{\Omega^o}&|\epsilon|^{n-1}\mathcal{K}'^{o,i}_\epsilon\\
\mathrm{sgn}(\epsilon)\mathcal{K}'^{i,o}_\epsilon&-\mathcal{K}'_{\Omega^i}
\end{array}
\right)
\end{equation}
for all $\epsilon\in \mathopen]-\epsilon_0,\epsilon_0[\setminus\{0\}$. Moreover,  the following statements hold.
\begin{itemize}
\item[(i)] The coefficients $\mathcal{K}'^{o,i}_{(k)}$ of the power series expansion $\mathcal{K}'^{o,i}_\epsilon=\sum_{k=0}^\infty \epsilon^k \mathcal{K}'^{o,i}_{(k)}$
with $\epsilon$ in a neighborhood of $0$ are given by
\[
\mathcal{K}_{(k)}^{\prime o,i}[\theta^i](x)\equiv  (-1)^{k} \sum_{\substack{\beta \in \mathbb{N}^n\\|\beta|=k}} \frac{1}{\beta!}\nu_{\Omega^o}(x) \cdot (\nabla D^\beta G_n)(x)\int_{\partial \Omega^i} s^\beta \theta^i(s)\, d\sigma_s 
\]
for all $k\in\mathbb{N}$, $x \in \partial \Omega^o$, and $\theta^i \in C^{0,\alpha}(\partial \Omega^i)$.
\item[(ii)] The coefficients $\mathcal{K}'^{i,o}_{(k)}$ of the power series expansion
$\mathcal{K}'^{i,o}_\epsilon=\sum_{k=0}^\infty \epsilon^k \mathcal{K}'^{i,o}_{(k)}$  with $\epsilon$ in a neighborhood of $0$ are given by
\[
\mathcal{K}_{(k)}^{\prime i,o}[\theta^o](t)\equiv (-1)^{k}\sum_{\substack{\beta \in \mathbb{N}^n\\|\beta|=k}} \frac{1}{\beta!}t^\beta \nu_{\Omega^i}(t) \cdot \int_{\partial \Omega^o} (\nabla D^\beta G_n)(y)\theta^o(y)\, d\sigma_y
\]
for all $k\in\mathbb{N}$, $t \in \partial \Omega^i$, and  $\theta^o \in C^{0,\alpha}(\partial \Omega^o)$.
\end{itemize}
\end{theorem}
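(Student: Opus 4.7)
The proof will follow closely the template established in Theorems \ref{Veps} and \ref{Keps}. For a pair $(\theta^o,\theta^i)$ and $\epsilon\in\mathopen]-\epsilon_0,\epsilon_0\mathclose[\setminus\{0\}$, I begin by splitting the defining integral
\[
\mathcal{K}'_{\Omega(\epsilon)}[\mu_\epsilon](z)=\int_{\partial\Omega(\epsilon)}\nu_{\Omega(\epsilon)}(z)\cdot\nabla G_n(z-y)\mu_\epsilon(y)\,d\sigma_y
\]
into contributions from $\partial\Omega^o$ and $\partial(\epsilon\Omega^i)$ and performing the change of variable $y=\epsilon s$ in the latter (with $d\sigma_y=|\epsilon|^{n-1}d\sigma_s$). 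For $z=x\in\partial\Omega^o$ the normal equals $\nu_{\Omega^o}(x)$ independently of $\epsilon$, so the first piece gives $\mathcal{K}'_{\Omega^o}[\theta^o](x)$ and the second piece equals $|\epsilon|^{n-1}$ times
\[
\mathcal{K}'^{o,i}_\epsilon[\theta^i](x)\equiv\int_{\partial\Omega^i}\nu_{\Omega^o}(x)\cdot\nabla G_n(x-\epsilon s)\theta^i(s)\,d\sigma_s,
\]
which identifies the first row of the matrix in \eqref{K'eps.eq1}.

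The second row requires attention to the orientation. At $\epsilon t\in\partial(\epsilon\Omega^i)$, the outward normal to $\Omega(\epsilon)$ equals $-\mathrm{sgn}(\epsilon)\nu_{\Omega^i}(t)$ (scaling by a negative number flips the normal, and the outward normal of $\Omega(\epsilon)$ points into $\epsilon\Omega^i$). Combining this with the identity $\nabla G_n(\epsilon\eta)=\mathrm{sgn}(\epsilon)|\epsilon|^{1-n}\nabla G_n(\eta)$ already used in the proof of Theorem \ref{Keps}, the diagonal $(2,2)$ contribution reduces cleanly to $-\mathcal{K}'_{\Omega^i}[\theta^i](t)$ (the two sign factors $\mathrm{sgn}(\epsilon)$ cancel with the $|\epsilon|^{n-1}\cdot|\epsilon|^{1-n}=1$), while the $(2,1)$ contribution produces the prefactor $\mathrm{sgn}(\epsilon)$ in front of
\[
\mathcal{K}'^{i,o}_\epsilon[\theta^o](t)\equiv -\int_{\partial\Omega^o}\nu_{\Omega^i}(t)\cdot\nabla G_n(\epsilon t-y)\theta^o(y)\,d\sigma_y,
\]
matching \eqref{K'eps.eq1}.

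Real analyticity of $\epsilon\mapsto\mathcal{K}'^{o,i}_\epsilon$ and $\epsilon\mapsto\mathcal{K}'^{i,o}_\epsilon$ in the stated operator spaces then follows from the regularity results for integral operators with real analytic kernel in \cite{LaMu13}, applied exactly as in the proofs of Theorems \ref{Veps} and \ref{Keps}: the kernels $(x,s,\epsilon)\mapsto\nabla G_n(x-\epsilon s)$ and $(t,y,\epsilon)\mapsto\nabla G_n(\epsilon t-y)$ are jointly real analytic on the relevant neighborhoods of $\partial\Omega^o\times\partial\Omega^i\times\mathopen]-\epsilon_0,\epsilon_0[$, since the arguments remain bounded away from $0$.

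Finally, the Taylor coefficients are obtained by differentiating under the integral sign with the aid of \eqref{der.eq0}. For statement (i), applying \eqref{der.eq0} to $F=\nabla G_n$ and evaluating at $\epsilon=0$ yields directly the formula for $\mathcal{K}'^{o,i}_{(k)}$ after division by $k!$. For statement (ii), the same procedure produces $(\nabla D^\beta G_n)(-y)$; since $G_n$ is even, $(\nabla D^\beta G_n)(-y)=(-1)^{|\beta|+1}(\nabla D^\beta G_n)(y)$, and the extra sign combines with the outer minus sign in the definition of $\mathcal{K}'^{i,o}_\epsilon$ to give the stated $(-1)^k$. The only step I expect to require careful bookkeeping is keeping track of the three independent sources of sign, namely the orientation reversal of $\nu_{\Omega(\epsilon)}$ on $\partial(\epsilon\Omega^i)$, the factor $\mathrm{sgn}(\epsilon)$ in $\nabla G_n(\epsilon\eta)$, and the parity of $D^\beta G_n$; everything else is a routine transcription of the argument used for Theorems \ref{Veps} and \ref{Keps}.
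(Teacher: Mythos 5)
Your proposal is correct and follows essentially the same route as the paper's proof: change of variables with the identity $\nabla G_n(\epsilon\eta)=\mathrm{sgn}(\epsilon)|\epsilon|^{1-n}\nabla G_n(\eta)$ to obtain \eqref{K'eps.eq1}, real analyticity via the results of \cite{LaMu13}, and Taylor coefficients from \eqref{der.eq0} together with the parity of $D^\beta G_n$. Your explicit bookkeeping of the normal $\nu_{\Omega(\epsilon)}=-\mathrm{sgn}(\epsilon)\nu_{\Omega^i}(\cdot/\epsilon)$ on $\partial(\epsilon\Omega^i)$ is exactly what the paper leaves implicit in its ``straightforward computation,'' and all signs come out as stated.
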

\begin{proof}  Let $(\theta^o,\theta^i) \in C^{0,\alpha}(\partial \Omega^o)\times C^{0,\alpha}(\partial \Omega^i)$, $\epsilon\in \mathopen]-\epsilon_0,\epsilon_0[\setminus\{0\}$.  By a straightforward computation based on the theorem of change of variable in integrals we can see that 
\[
\begin{aligned}
\mathcal{K}'^o_{\epsilon}[\theta^o,\theta^i](x)&
=\mathcal{K}'_{\Omega^o}[\theta^o](x)+ |\epsilon|^{n-1}\int_{\partial \Omega^i}\nu_{\Omega^o}(x)\cdot \nabla G_n(x-\epsilon s)\theta^i(s)\, d\sigma_s&\forall x \in \partial \Omega^o\,,\\
\mathcal{K}'^i_{\epsilon}[\theta^o,\theta^i](t)&=-\mathrm{sgn}(\epsilon)\int_{\partial \Omega^o}\nu_{\Omega^i}(t)\cdot \nabla G_n(\epsilon t -y)\theta^o(y)\, d\sigma_y- \mathcal{K}'_{\Omega^i}[\theta^i](t)&\forall t \in \partial \Omega^i\,.
\end{aligned}
\]
Then \eqref{K'eps.eq1} holds with 
\[
\begin{aligned}
\mathcal{K}'^{o,i}_\epsilon[\theta^i](x)&\equiv\int_{\partial \Omega^i}\nu_{\Omega^o}(x)\cdot \nabla G_n(x-\epsilon s)\theta^i(s)\, d\sigma_s&\forall x \in \partial \Omega^o\,,\;\forall\theta^i \in C^{0,\alpha}(\partial \Omega^i)\,,\\
\mathcal{K}_\epsilon^{\prime i,o}[\theta^o](t)&\equiv -\int_{\partial \Omega^o}\nu_{\Omega^i}(t)\cdot \nabla G_n(\epsilon t -y)\theta^o(y)\, d\sigma_y&\forall t \in \partial \Omega^i\,,\;\forall\theta^o \in C^{0,\alpha}(\partial \Omega^o)\,.
\end{aligned}
\]
 By  the regularity results for the integral operators with real analytic kernel of \cite{LaMu13} (see also the argument in the proof of Theorem \ref{thm:anlpop}) we can see that the maps $\epsilon\mapsto \mathcal{K}'^{o,i}_\epsilon$ and $\epsilon\mapsto \mathcal{K}'^{i,o}_\epsilon$ are real analytic from $\mathopen]-\epsilon_0,\epsilon_0[$ to $\mathcal{L}(C^{0,\alpha}(\partial \Omega^i),C^{0,\alpha}(\partial \Omega^o))$ and from $\mathopen]-\epsilon_0,\epsilon_0[$ to $\mathcal{L}(C^{0,\alpha}(\partial \Omega^o),C^{0,\alpha}(\partial \Omega^i))$, respectively. 
 
To verify statement (i) we compute
\[
\begin{split}
&\partial_\epsilon^k\bigg( \int_{\partial \Omega^i}\nu_{\Omega^o}(x)\cdot \nabla G_n(x-\epsilon s)\theta^i(s)\, d\sigma_s\bigg)\\
&\qquad=(-1)^k \sum_{\substack{\beta \in \mathbb{N}^n\\|\beta|=k}} \frac{k!}{\beta!}\int_{\partial \Omega^i}\nu_{\Omega^o}(x) \cdot  (\nabla D^\beta G_n)(x-\epsilon s)s^\beta \theta^i(s)\, d\sigma_s\, ,
\end{split}
\]
and accordingly
\[
\begin{split}
&\partial_\epsilon^k\bigg(\int_{\partial \Omega^i}\nu_{\Omega^o}(x)\cdot \nabla G_n(x-\epsilon s)\theta^i(s)\, d\sigma_s\bigg)_{|\epsilon=0}\\
&\qquad=(-1)^{k} \sum_{\substack{\beta \in \mathbb{N}^n\\|\beta|=k}} \frac{k!}{\beta!}\nu_{\Omega^o}(x) \cdot (\nabla D^\beta G_n)(x)\int_{\partial \Omega^i} s^\beta \theta^i(s)\, d\sigma_s \, .
\end{split}
\]

To verify statement (ii), we note that  we have
\[
\begin{split}
&\partial_\epsilon^k\bigg( \int_{\partial \Omega^o}\nu_{\Omega^i}(t)\cdot \nabla G_n(\epsilon t -y)\theta^o(y)\, d\sigma_y\bigg)\\
&\qquad=\sum_{\substack{\beta \in \mathbb{N}^n\\|\beta|=k}} \frac{k!}{\beta!}\int_{\partial \Omega^o}\nu_{\Omega^i}(t) \cdot  (\nabla D^\beta G_n)(\epsilon t-y) t^\beta \theta^o(y)\, d\sigma_y
\end{split}
\]
and accordingly  
\[
\begin{split}
&\partial_\epsilon^k\bigg(-\int_{\partial \Omega^o}\nu_{\Omega^i}(t)\cdot \nabla G_n(\epsilon t -y)\theta^o(y)\, d\sigma_y\bigg)_{|\epsilon=0}\\
&\qquad=-\sum_{\substack{\beta \in \mathbb{N}^n\\|\beta|=k}} \frac{k!}{\beta!}\int_{\partial \Omega^o}\nu_{\Omega^i}(t) \cdot  (\nabla D^\beta G_n)(-y)t^\beta \theta^o(y)\, d\sigma_y\\
&\qquad=(-1)^{k}\sum_{\substack{\beta \in \mathbb{N}^n\\|\beta|=k}} \frac{k!}{\beta!}t^\beta \nu_{\Omega^i}(t) \cdot \int_{\partial \Omega^o} (\nabla D^\beta G_n)(y)\theta^o(y)\, d\sigma_y \, .
\end{split}
\]
\end{proof}


\subsection{The operator $\mathcal{W}_{\Omega(\epsilon)}$}

The last operator to consider is $\mathcal{W}_{{\Omega(\epsilon)}}$  (see definition \eqref{Wdef}). As usual, we define
\begin{align*}
&\mathcal{W}^o_{\epsilon}[\theta^o,\theta^i](x)\equiv\mathcal{W}_{{\Omega(\epsilon)}}[\psi_\epsilon](x)&\forall x\in\partial\Omega^o\,,\\
&\mathcal{W}^i_{\epsilon}[\theta^o,\theta^i](t)\equiv\mathcal{W}_{{\Omega(\epsilon)}}[\psi_\epsilon](\epsilon t)&\forall t\in\partial\Omega^i\,,
\end{align*}
for all $\epsilon \in \mathopen]-\epsilon_0,\epsilon_0\mathclose[\setminus \{0\}$ and  $(\theta^o,\theta^i)\in C^{1,\alpha}(\partial \Omega^o)\times C^{1,\alpha}(\partial \Omega^i)$, with $\psi_\epsilon$ as in \eqref{psieps}. Then we take 
\[
\mathcal{W}_\epsilon\equiv(\mathcal{W}^o_{\epsilon},\mathcal{W}^i_{\epsilon})\quad\forall \epsilon \in \mathopen]-\epsilon_0,\epsilon_0\mathclose[\setminus \{0\}
\]
and we wish to describe the map $\epsilon\mapsto\mathcal{W}_\epsilon$ from $]-\epsilon_0,\epsilon_0\mathclose[\setminus \{0\}$ to  $\mathcal{L}(C^{1,\alpha}(\partial \Omega^o)\times C^{1,\alpha}(\partial \Omega^i),C^{0,\alpha}(\partial \Omega^o)\times C^{0,\alpha}(\partial \Omega^i))$.

\begin{theorem}\label{Weps} Let $\alpha$, $\Omega^o$, $\Omega^i$ be as in \eqref{eq:ass:sing}. Let $\epsilon_0$ be as in \eqref{eq:e0}. There exist real analytic maps
\[
\begin{aligned}
\mathopen]-\epsilon_0,\epsilon_0[&\to &&\mathcal{L}(C^{1,\alpha}(\partial \Omega^i),C^{0,\alpha}(\partial \Omega^o))\\
\epsilon&\mapsto&&\mathcal{W}^{o,i}_\epsilon
\end{aligned}
\] 
and
\[
\begin{aligned}
\mathopen]-\epsilon_0,\epsilon_0[&\to &&\mathcal{L}(C^{1,\alpha}(\partial \Omega^o),C^{0,\alpha}(\partial \Omega^i))\\
\epsilon&\mapsto&&\mathcal{W}^{i,o}_\epsilon
\end{aligned}
\] 
such that 
\begin{equation}\label{Weps.eq1}
\mathcal{W}_\epsilon=
\left(\begin{array}{cc}
\mathcal{W}_{\Omega^o}&\epsilon|\epsilon|^{n-2}\mathcal{W}^{o,i}_\epsilon\\
|\epsilon|^{-1}\mathcal{W}^{i,o}_\epsilon&|\epsilon|^{-1}\mathcal{W}_{\Omega^i}
\end{array}
\right)
\end{equation}
for all $\epsilon\in \mathopen]-\epsilon_0,\epsilon_0[\setminus\{0\}$. Moreover,  the following statements hold.
\begin{itemize}
\item[(i)] The coefficients $\mathcal{W}^{o,i}_{(k)}$ of the power series expansion $\mathcal{W}^{o,i}_\epsilon=\sum_{k=0}^\infty \epsilon^k \mathcal{W}^{o,i}_{(k)}$
with $\epsilon$ in a neighborhood of $0$ are given by
\[
\mathcal{W}_{(k)}^{o,i}[\theta^i](x)\equiv  (-1)^{k+1} \sum_{\substack{\beta \in \mathbb{N}^n\\|\beta|=k}} \frac{1}{\beta!}\nu_{\Omega^o}(x)\cdot \nabla_x  \Bigg((\nabla D^\beta G_n)(x)\cdot \int_{\partial \Omega^i}\nu_{\Omega^i}(s)  s^\beta \theta^i(s)\, d\sigma_s\Bigg)
\]
for all $k\in\mathbb{N}$, $x \in \partial \Omega^o$, and $\theta^i \in C^{1,\alpha}(\partial \Omega^i)$.
\item[(ii)] The coefficients $\mathcal{W}^{i,o}_{(k)}$ of the power series expansion
$\mathcal{W}^{i,o}_\epsilon=\sum_{k=0}^\infty \epsilon^k \mathcal{W}^{i,o}_{(k)}$  with $\epsilon$ in a neighborhood of $0$ are given by
\[
\mathcal{W}_{(k)}^{i,o}[\theta^o](t)\equiv (-1)^{k}\sum_{\substack{\beta \in \mathbb{N}^n\\|\beta|=k}} \frac{1}{\beta!}\frac{\partial t^\beta}{\partial \nu_{\Omega^i}(t)} \int_{\partial \Omega^o}\nu_{\Omega^o}(y) \cdot  (\nabla D^\beta G_n)(y)\theta^o(y)\, d\sigma_y 
\]
for all $k\in\mathbb{N}$, $t \in \partial \Omega^i$, and  $\theta^o \in C^{1,\alpha}(\partial \Omega^o)$.
\end{itemize}
\end{theorem}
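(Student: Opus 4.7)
The proof will closely follow the pattern of Theorems \ref{Veps}, \ref{Keps}, and \ref{K'eps}. Starting from
\[
\mathcal{W}_{\Omega(\epsilon)}[\psi](x) = \nu_{\Omega(\epsilon)}(x)\cdot\nabla_x\int_{\partial\Omega(\epsilon)}\nu_{\Omega(\epsilon)}(y)\cdot\nabla G_n(x-y)\psi(y)\,d\sigma_y
\]
and splitting the integral over the two components of $\partial\Omega(\epsilon)$, I will apply the change of variable $y = \epsilon s$ on $\partial(\epsilon\Omega^i)$, using the identity $\nu_{\Omega(\epsilon)}(\epsilon s) = -\mathrm{sgn}(\epsilon)\nu_{\Omega^i}(s)$ because the outer normal to the perforated domain points into the hole. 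Pulling $\nabla_x$ inside the integral turns $\nu\cdot\nabla G_n$ into the Hessian $HG_n\equiv\nabla\nabla G_n$ contracted against the two normals, and the four blocks of $\mathcal{W}_\epsilon$ are then expressed explicitly in terms of $HG_n(x-y)$ (top-left, yielding $\mathcal{W}_{\Omega^o}$), $HG_n(x-\epsilon s)$ (top-right), $HG_n(\epsilon t-y)$ (bottom-left), and $HG_n(\epsilon(t-s))$ (bottom-right).

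The decisive ingredient is the scaling $HG_n(\epsilon\eta) = |\epsilon|^{-n}HG_n(\eta)$, valid for all $\epsilon\neq 0$ and $\eta\neq 0$ (a direct consequence of $G_n$ being homogeneous of degree $2-n$ up to a constant summand when $n=2$); observe also that $HG_n$ is even in $\eta$. In the bottom-right block this scaling combines with the Jacobian $|\epsilon|^{n-1}$ of the area element of $\partial(\epsilon\Omega^i)$ and with the two copies of $-\mathrm{sgn}(\epsilon)$ from the two normals, collapsing to exactly $|\epsilon|^{-1}\mathcal{W}_{\Omega^i}$. In the off-diagonal blocks only one of the two normals sits on the small hole, so a single $-\mathrm{sgn}(\epsilon)$ survives: combined with the $|\epsilon|^{n-1}$ from the Jacobian in the top-right it produces $-\mathrm{sgn}(\epsilon)|\epsilon|^{n-1}=-\epsilon|\epsilon|^{n-2}$, and I will absorb the sign into the definition
\[
\mathcal{W}^{o,i}_\epsilon[\theta^i](x) \equiv -\nu_{\Omega^o}(x)\cdot\int_{\partial\Omega^i}HG_n(x-\epsilon s)\nu_{\Omega^i}(s)\theta^i(s)\,d\sigma_s.
\]
In the bottom-left there is no Jacobian but a single $-\mathrm{sgn}(\epsilon)$, and to match the prefactor $|\epsilon|^{-1}$ displayed in \eqref{Weps.eq1} I will set
\[
\mathcal{W}^{i,o}_\epsilon[\theta^o](t) \equiv -\epsilon\,\nu_{\Omega^i}(t)\cdot\int_{\partial\Omega^o}HG_n(\epsilon t-y)\nu_{\Omega^o}(y)\theta^o(y)\,d\sigma_y,
\]
which vanishes at $\epsilon=0$, in agreement with the fact that the $k=0$ coefficient of (ii) is zero since $\partial t^0/\partial\nu_{\Omega^i}=0$.

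Real analyticity of $\epsilon\mapsto\mathcal{W}^{o,i}_\epsilon$ and $\epsilon\mapsto\mathcal{W}^{i,o}_\epsilon$ on the whole interval $\mathopen]-\epsilon_0,\epsilon_0\mathclose[$ will then follow from the integral-operator results of \cite{LaMu13} by exactly the same argument used in the previous three theorems, once one observes that for $|\epsilon|<\epsilon_0$ the arguments $x-\epsilon s$ and $\epsilon t-y$ stay bounded away from the origin, and hence $HG_n$ is jointly real analytic there. The Taylor coefficients are then computed by differentiating under the integral sign via \eqref{der.eq0}. Statement (i) is direct: expanding $HG_n(x-\epsilon s)$ in powers of $\epsilon$ and using the identity $\nu_{\Omega^o}(x)\cdot(D^\beta HG_n)(x)c = \nu_{\Omega^o}(x)\cdot\nabla_x\bigl[(\nabla D^\beta G_n)(x)\cdot c\bigr]$ for the constant vector $c\equiv\int_{\partial\Omega^i}s^\beta\nu_{\Omega^i}(s)\theta^i(s)\,d\sigma_s$ matches the claimed formula immediately.

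The one non-routine step I anticipate is in (ii). The explicit factor $-\epsilon$ built into the definition of $\mathcal{W}^{i,o}_\epsilon$ shifts the Taylor index: writing $F(\epsilon)\equiv\nu_{\Omega^i}(t)\cdot\int HG_n(\epsilon t-y)\nu_{\Omega^o}(y)\theta^o(y)\,d\sigma_y$, the $k$-th coefficient $\mathcal{W}^{i,o}_{(k)}$ equals $-F^{(k-1)}(0)/(k-1)!$, and a direct application of \eqref{der.eq0} yields a sum over multiindices $\gamma$ with $|\gamma|=k-1$ rather than the $|\beta|=k$ in the statement. To recover the claimed form I will reindex via $\beta=\gamma+e_j$, use the elementary identity $\beta_j/\beta!=1/(\beta-e_j)!=1/\gamma!$ to recognize $\sum_j\nu_{\Omega^i}(t)_j\beta_j t^{\beta-e_j}$ as $\partial t^\beta/\partial\nu_{\Omega^i}(t)$, and exploit the evenness relation $(D^\gamma HG_n)(-y)=(-1)^{|\gamma|}(D^\gamma HG_n)(y)$ to absorb the sign $(-1)^{k-1}$ into the factor $(-1)^k$ appearing in the statement. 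This combinatorial reindexing and sign bookkeeping will be the only delicate part of the argument.
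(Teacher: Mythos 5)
Your route is essentially the paper's: split the boundary integrals, rescale the hole via $y=\epsilon s$ with $d\sigma_y=|\epsilon|^{n-1}d\sigma_s$ and $\nu_{\Omega(\epsilon)}(\epsilon s)=-\mathrm{sgn}(\epsilon)\,\nu_{\Omega^i}(s)$, use the homogeneity of the kernel to extract the elementary factors in \eqref{Weps.eq1}, obtain analyticity of the off-diagonal families from \cite{LaMu13} as in Theorems \ref{Veps}--\ref{K'eps}, and compute the Taylor coefficients by differentiating under the integral sign through \eqref{der.eq0}. Your $\mathcal{W}^{o,i}_\epsilon$ and $\mathcal{W}^{i,o}_\epsilon$ are the same operators as in the paper (the paper keeps $\nabla_x$, resp.\ $\nabla_t$, outside the integral; your Hessian form with the explicit factor $\epsilon$ in the bottom-left entry is just the chain rule made explicit), and your computations of (i) and (ii) are correct. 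In (ii) the paper avoids your reindexing $\beta=\gamma+e_j$: it keeps $\nabla_t$ outside, evaluates at $\epsilon=0$ so that the integral no longer depends on $t$, and then $\nabla_t$ hits only $t^\beta$, producing $\partial t^\beta/\partial\nu_{\Omega^i}(t)$ directly; your combinatorial bookkeeping with $\beta_j/\beta!=1/\gamma!$ and the evenness of the Hessian reaches the same formula and is a legitimate variant.

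The one step that fails as literally written is ``pulling $\nabla_x$ inside the integral'' to express \emph{all four} blocks, in particular the diagonal ones, as integrals of $HG_n$: when the field point lies on the surface of integration, $HG_n(x-y)=O(|x-y|^{-n})$ is not integrable on an $(n-1)$-dimensional manifold, so $\mathcal{W}_{\Omega^o}$ and $\mathcal{W}_{\Omega^i}$ admit no absolutely convergent representation with the Hessian kernel; $\mathcal{W}$ is hypersingular and is defined in \eqref{Wdef} through the normal derivative of the extensions $\mathcal{D}^{\mathrm{int}}$, $\mathcal{D}^{\mathrm{ext}}$, not as a boundary integral operator, and the interchange of $\nabla_x$ with the integral is justified only for the off-diagonal blocks, where the singularity is off the support. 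Your conclusions for those blocks (top-left $=\mathcal{W}_{\Omega^o}$, bottom-right $=|\epsilon|^{-1}\mathcal{W}_{\Omega^i}$) are nonetheless correct, and the repair is short and is what the paper's change-of-variables computation amounts to: perform the rescaling inside the double layer potential at points off the boundary, where differentiation under the integral sign is allowed (the double layer on $\epsilon\Omega^i$ with density $\theta^i(\cdot/\epsilon)$ evaluated near $\epsilon t$ coincides, up to the $\mathrm{sgn}(\epsilon)$ bookkeeping you already track, with the double layer on $\Omega^i$ with density $\theta^i$ evaluated near $t$), and only then take $-\nu\cdot\nabla$ of the extensions as in \eqref{Wdef}; the chain rule in $t$ yields exactly the factor $|\epsilon|^{-1}$. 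With this adjustment your argument is complete and coincides in substance with the paper's proof.
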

\begin{proof}  Let $(\theta^o,\theta^i) \in C^{1,\alpha}(\partial \Omega^o)\times C^{1,\alpha}(\partial \Omega^i)$, $\epsilon\in \mathopen]-\epsilon_0,\epsilon_0[\setminus\{0\}$.  By the theorem of change of variable in integrals we can compute that 
\[
\begin{split}
\mathcal{W}_{\epsilon}^o[\theta^o,\theta^i](x)&=\mathcal{W}_{\Omega^o}[\theta^o](x)-|\epsilon|^{n-1}\mathrm{sgn}(\epsilon) \nu_{\Omega^o}(x)\cdot \nabla_x  \int_{\partial \Omega^i}\nu_{\Omega^i}(s)\cdot \nabla G_n(x-\epsilon s)\theta^i(s)\, d\sigma_s\\
&
=\mathcal{W}_{\Omega^o}[\theta^o](x)-\epsilon |\epsilon|^{n-2}\nu_{\Omega^o}(x)\cdot \nabla_x\int_{\partial \Omega^i}\nu_{\Omega^i}(s)\cdot \nabla G_n(x-\epsilon s)\theta^i(s)\, d\sigma_s \end{split}
\]
for all $x\in\partial \Omega^o$, and 
\[
\mathcal{W}_{\epsilon}^i[\theta^o,\theta^i](t)=-|\epsilon|^{-1} \nu_{\Omega^i}(t)\cdot \nabla_t \int_{\partial \Omega^o}\nu_{\Omega^o}(y)\cdot \nabla G_n(\epsilon t -y)\theta^o(y)\, d\sigma_y+|\epsilon|^{-1} \mathcal{W}_{\Omega^i}[\theta^i](t)  
\]
for all $t\in \partial \Omega^i$. Then \eqref{Weps.eq1} holds with 
\[
\mathcal{W}^{o,i}_\epsilon[\theta^i](x)\equiv-\nu_{\Omega^o}(x)\cdot \nabla_x\int_{\partial \Omega^i}\nu_{\Omega^i}(s)\cdot \nabla G_n(x-\epsilon s)\theta^i(s)\, d\sigma_s
\]
for all $x\in\partial\Omega^o$ and $\theta^i \in C^{1,\alpha}(\partial \Omega^i)$, and 
\[
\mathcal{W}_\epsilon^{i,o}[\theta^o](t)\equiv -\nu_{\Omega^i}(t)\cdot \nabla_t \int_{\partial \Omega^o}\nu_{\Omega^o}(y)\cdot \nabla G_n(\epsilon t -y)\theta^o(y)\, d\sigma_y  
\]
for all $t \in \partial \Omega^i$, $\theta^o \in C^{1,\alpha}(\partial \Omega^o)$.

 By  the regularity results for the integral operators with real analytic kernel of \cite{LaMu13} (see also the argument in the proof of Theorem \ref{thm:anlpop}) we can verify that the maps $\epsilon\mapsto \mathcal{W}^{o,i}_\epsilon$ and $\epsilon\mapsto \mathcal{W}^{i,o}_\epsilon$ are real analytic from $\mathopen]-\epsilon_0,\epsilon_0[$ to $\mathcal{L}(C^{1,\alpha}(\partial \Omega^i),C^{0,\alpha}(\partial \Omega^o))$ and from $\mathopen]-\epsilon_0,\epsilon_0[$ to $\mathcal{L}(C^{1,\alpha}(\partial \Omega^o),C^{0,\alpha}(\partial \Omega^i))$, respectively. 
 
To verify statement (i) we compute
\[
\begin{split}
&\partial_\epsilon^k\bigg(-\nu_{\Omega^o}(x)\cdot \nabla_x \int_{\partial \Omega^i}\nu_{\Omega^i}(s)\cdot \nabla G_n(x-\epsilon s)\theta^i(s)\, d\sigma_s\bigg)\\
&\qquad=(-1)^{k+1} \sum_{\substack{\beta \in \mathbb{N}^n\\|\beta|=k}} \frac{k!}{\beta!}\nu_{\Omega^o}(x)\cdot \nabla_x\Bigg(\int_{\partial \Omega^i}\nu_{\Omega^i}(s) \cdot  (\nabla D^\beta G_n)(x-\epsilon s)s^\beta \theta^i(s)\, d\sigma_s\Bigg)\, ,
\end{split}
\]
and accordingly
\[
\begin{split}
&\partial_\epsilon^k\bigg(-\nu_{\Omega^o}(x)\cdot \nabla_x \int_{\partial \Omega^i}\nu_{\Omega^i}(s)\cdot \nabla G_n(x-\epsilon s)\theta^i(s)\, d\sigma_s\bigg)_{|\epsilon=0}\\
&\qquad=(-1)^{k+1} \sum_{\substack{\beta \in \mathbb{N}^n\\|\beta|=k}} \frac{k!}{\beta!}\nu_{\Omega^o}(x)\cdot \nabla_x \Bigg((\nabla D^\beta G_n)(x)\cdot \int_{\partial \Omega^i}\nu_{\Omega^i}(s)  s^\beta \theta^i(s)\, d\sigma_s\Bigg) \, .
\end{split}
\]
To verify statement (ii) we compute
\[
\begin{split}
&\partial_\epsilon^k\bigg( -\nu_{\Omega^i}(t)\cdot \nabla_t \int_{\partial \Omega^o}\nu_{\Omega^o}(y)\cdot \nabla G_n(\epsilon t -y)\theta^o(y)\, d\sigma_y\bigg)\\
&\qquad=-\sum_{\substack{\beta \in \mathbb{N}^n\\|\beta|=k}} \frac{k!}{\beta!}\nu_{\Omega^i}(t)\cdot \nabla_t \int_{\partial \Omega^o}\nu_{\Omega^o}(y) \cdot  (\nabla D^\beta G_n)(\epsilon t-y) t^\beta \theta^o(y)\, d\sigma_y
\end{split}
\]
and accordingly
\[
\begin{split}
&\partial_\epsilon^k\bigg(-\nu_{\Omega^i}(t)\cdot \nabla_t \int_{\partial \Omega^o}\nu_{\Omega^o}(y)\cdot \nabla G_n(\epsilon t -y)\theta^o(y)\, d\sigma_y \bigg)_{|\epsilon=0}\\
&\qquad=-\sum_{\substack{\beta \in \mathbb{N}^n\\|\beta|=k}} \frac{k!}{\beta!}\nu_{\Omega^i}(t)\cdot \nabla_t \int_{\partial \Omega^o}\nu_{\Omega^o}(y) \cdot  (\nabla D^\beta G_n)(-y) t^\beta \theta^o(y)\, d\sigma_y\\
&\qquad=(-1)^{k}\sum_{\substack{\beta \in \mathbb{N}^n\\|\beta|=k}} \frac{k!}{\beta!}\frac{\partial t^\beta}{\partial \nu_{\Omega^i}(t)} \int_{\partial \Omega^o}\nu_{\Omega^o}(y) \cdot  (\nabla D^\beta G_n)(y)\theta^o(y)\, d\sigma_y \, .
\end{split}
\]
\end{proof}

In conclusion of this section, we note that, putting together  the results obtained for the operators 
$\mathcal{V}_\epsilon$, $\mathcal{K}_\epsilon$,  $\mathcal{K}'_\epsilon $ and $\mathcal{W}_\epsilon $, we may also describe the map that takes  $\epsilon$ to the (pull-back of) the corresponding Calder\'on projector.

\subsection*{Acknowledgment}

The authors are members of the ``Gruppo Nazionale per l'Analisi Matematica, la Probabilit\`a e le loro Applicazioni'' (GNAMPA) of the ``Istituto Nazionale di Alta Matematica'' (INdAM). P.L.~and P.M.~acknowledge the support of the Project BIRD191739/19 ``Sensitivity analysis of partial differential equations in
the mathematical theory of electromagnetism'' of the University of Padova.  
P.M.~acknowledges the support of  the grant ``Challenges in Asymptotic and Shape Analysis - CASA''  of the Ca' Foscari University of Venice.  P.M.~also acknowledges the support from EU through the H2020-MSCA-RISE-2020 project EffectFact, 
Grant agreement ID: 101008140.

\vspace{5mm}

\end{document}